\def\NAT@def@citea{\def\@citea{\NAT@separator}}
\theoremstyle{plain}
\newtheorem{theorem}{Theorem}[section]
\newtheorem{lemma}[theorem]{Lemma}
\newtheorem{proposition}[theorem]{Proposition}
\theoremstyle{definition}
\newtheorem{definition}[theorem]{Definition}
\theoremstyle{remark}
\newtheorem{remark}{Remark}
\DeclareMathOperator*{\esup}{ess\ sup}
\begin{document}

\articletype{Research Paper}

\title{Variational and nonvariational solutions for double phase variable exponent problems}

\author{\name{Mustafa Avci\thanks{CONTACT M.~Avci. Email:  mavci@athabascau.ca (primary) \& avcixmustafa@gmail.com}}
\affil{Faculty of Science and Technology, Applied Mathematics, Athabasca University, AB, Canada}}

\maketitle

\begin{abstract}
In this article, we examine two double-phase variable exponent problems, each formulated within a distinct framework. The first problem is non-variational, as the nonlinear term may depend on the gradient of the solution. The first main result establishes an existence property from the nonlinear monotone operator theory given by Browder and Minty. The second problem is set up within a variational framework, where we employ a well-known critical point result by Bonanno and Chinn\`{\i}. In both cases, we demonstrate the existence of at least one nontrivial solution. To illustrate the practical application of the main results, we provide examples for each problem.
\end{abstract}

\begin{keywords}
Double phase variable exponent problem; monotone operator theory; critical point theory; Ginzburg-Landau-type equation; convection term; Musielak-Orlicz Sobolev space
\end{keywords}

\begin{amscode}
35A01; 35A16; 35D30; 35Q56
\end{amscode}

\section{Introduction}
In this article, we study the following two double phase variable exponent problems each is set up in a different framework to correspond to different physical phenomena.\\
The first problem considered is in the nonvariational structure since we let the nonlinearity $f$ have gradient dependence (so-called convection terms)
\begin{equation}\label{e1.1}
\begin{cases}
\begin{array}{rlll}
&-\mathrm{div}(|\nabla u|^{p(x)-2}\nabla u+\mu(x)|\nabla u|^{q(x)-2}\nabla u)\\
&+\alpha(x)\left(\frac{|u|^{ r(x)}}{r(x)}-\gamma(x)\right)|u|^{r(x)-2}u=f(x,u,\nabla u) \quad \text{ in }\Omega, \\
& \qquad \qquad \qquad \qquad \quad \quad \quad \quad \quad u=0  \quad \text{ on }\partial \Omega,\tag{$\mathcal{P}$}
\end{array}
\end{cases}
\end{equation}
where $\Omega$ is a bounded domain in $\mathbb{R}^N$ $(N\geq2)$ with Lipschitz boundary; $p,q,r \in C_+(\overline{\Omega })$; $f \in W_0^{1,\mathcal{H}}(\Omega)^*$; $0\leq \mu(\cdot)\in L^\infty(\Omega)$; $\alpha,\gamma\in L^{\infty}(\Omega)$ with $\inf_{x\in \Omega}\alpha(x),\gamma(x)>0$. By applying the existence results (Lemma \ref{Lem:3.1}) of Browder \cite{browder1963nonlinear} and Minty \cite{minty1963monotonicity}, we obtain the existence of at least one nontrivial non-variational solution.\\

The second problem is given in the variational structure
\begin{equation}\label{e1.1a}
\begin{cases}
\begin{array}{rlll}
&-\mathrm{div}(|\nabla u|^{p(x)-2}\nabla u+\mu(x)|\nabla u|^{q(x)-2}\nabla u)=\lambda f(x,u) \quad \text{ in }\Omega, \\
& \qquad \qquad \qquad \qquad \qquad \qquad \quad \qquad \qquad u=0  \quad \text{ on }\partial \Omega,\tag{$\mathcal{P_\lambda}$}
\end{array}
\end{cases}
\end{equation}
where $\Omega$ is a bounded domain in $\mathbb{R}^N$ $(N\geq2)$ with Lipschitz boundary; $p,q\in C_+(\overline{\Omega })$; $f$ is a Carathéodory function; and $\lambda>0$ is a real paramater. We employ a well-known critical point result by Bonanno and Chinn\`{\i} \cite{bonanno2014existence} to obtain the existence of at least one nontrivial variational solution.\\

Equations of the form (\ref{e1.1a}) appear in models involving materials with non-uniform (anisotropic) properties, fluid mechanics, image processing, and elasticity in non-homogeneous materials. Consequently, problem (\ref{e1.1a}) can be used to model various real-world phenomena, primarily due to the presence of the operator
\begin{equation}\label{e1.2a}
\mathrm{div}\left(|\nabla u|^{p(x)-2}\nabla u+\mu(x)|\nabla u|^{q(x)-2}\nabla u\right)
\end{equation}
 which governs anisotropic and heterogeneous diffusion associated with the energy functional
\begin{equation}\label{e1.2b}
u\to \int_\Omega\left(\frac{|\nabla u|^{p(x)}}{p(x)}+\mu(x)\frac{|\nabla u|^{q(x)}}{q(x)}\right)dx
\end{equation}
is called a "double phase" operator because it encapsulates two different types of elliptic behavior within the same framework. The functional of the form (\ref{e1.2b}) was first introduced in \cite{zhikov1987averaging} for constant exponents. Since then, numerous studies have explored this topic due to its wide applicability across various disciplines (e.g., \cite{baroni2015harnack,baroni2018regularity,colombo2015bounded,colombo2015regularity,marcellini1991regularity,marcellini1989regularity,galewski2024variational,bu2022p,lapa2015no,el2023existence,albalawi2022gradient}).\\

The paper is organised as follows. In Section 2, we first provide some background for the theory of variable Sobolev spaces $W_{0}^{1,p(x)}(\Omega)$ and the Musielak-Orlicz Sobolev space $W_0^{1,\mathcal{H}}(\Omega)$, and then obtain an auxiliary result: Lemma \ref{Lem:3.3ba}. In Section 3, we set up the nonlinear operator equation corresponding to the problem (\ref{e1.1}), and obtain a nontrivial non-variational solution. In Section 4, obtain a nontrivial variational solution for the problem (\ref{e1.1a}). At the end of Sections 3 and 4, we present examples for each problem to demonstrate the practical application of the main results. As far as we're concerned, this paper is the first to address both variational and non-variational double phase variable exponent problems  within a single article.

\section{Mathematical Background and Auxiliary Results}

We start with some basic concepts of variable Lebesgue-Sobolev spaces. For more details, and the proof of the following propositions, we refer the reader to \cite{cruz2013variable,diening2011lebesgue,edmunds2000sobolev,fan2001spaces,radulescu2015partial}.
\begin{equation*}
C_{+}\left( \overline{\Omega }\right) =\left\{p\in C\left( \overline{\Omega }\right): \text{ }p\left( x\right) >1\text{ for all\ }x\in
\overline{\Omega }\right\} .
\end{equation*}
For $p\in C_{+}( \overline{\Omega }) $ denote
\begin{equation*}
p^{-}:=\underset{x\in \overline{\Omega }}{\min }p( x) \leq p( x) \leq p^{+}:=\underset{x\in \overline{\Omega }}{\max}p(x) <\infty .
\end{equation*}
For any $p\in C_{+}\left( \overline{\Omega }\right) $, we define \textit{the
variable exponent Lebesgue space} by
\begin{equation*}
L^{p(x)}(\Omega) =\left\{ u\mid u:\Omega\rightarrow\mathbb{R}\text{ is measurable},\int_{\Omega }|u(x)|^{p(x) }dx<\infty \right\}.
\end{equation*}
Then, $L^{p(x)}(\Omega)$ endowed with the norm
\begin{equation*}
|u|_{p(x)}=\inf \left\{ \lambda>0:\int_{\Omega }\left\vert \frac{u(x)}{\lambda }
\right\vert ^{p(x)}dx\leq 1\right\} ,
\end{equation*}
becomes a Banach space.\\
The convex functional $\rho :L^{p(x) }(\Omega) \rightarrow\mathbb{R}$ defined by
\begin{equation*}
\rho(u) =\int_{\Omega }|u(x)|^{p(x)}dx,
\end{equation*}
is called modular on $L^{p(x) }(\Omega)$.

\begin{proposition}\label{Prop:2.2} If $u,u_{n}\in L^{p(x) }(\Omega)$, we have
\begin{itemize}
\item[$(i)$] $|u|_{p(x) }<1 ( =1;>1) \Leftrightarrow \rho(u) <1 (=1;>1);$
\item[$(ii)$] $|u|_{p( x) }>1 \implies |u|_{p(x)}^{p^{-}}\leq \rho(u) \leq |u|_{p( x) }^{p^{+}}$;\newline
$|u|_{p(x) }\leq1 \implies |u|_{p(x) }^{p^{+}}\leq \rho(u) \leq |u|_{p(x) }^{p^{-}};$
\item[$(iii)$] $\lim\limits_{n\rightarrow \infty }|u_{n}-u|_{p(x)}=0\Leftrightarrow \lim\limits_{n\rightarrow \infty }\rho (u_{n}-u)=0$.
\end{itemize}
\end{proposition}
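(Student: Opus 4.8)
The plan is to derive all three items from the definition of the Luxemburg norm together with the monotonicity and continuity of the modular $\rho$. The one fact that does essentially all the work is the following: for $u\neq 0$ (and using $p^{+}<\infty$), the map $\lambda\mapsto\rho(u/\lambda)$ is finite on $(0,\infty)$, continuous, strictly decreasing, tends to $0$ as $\lambda\to\infty$ and to $+\infty$ as $\lambda\to0^{+}$; hence the infimum defining $|u|_{p(x)}$ is attained and $\rho\!\left(u/|u|_{p(x)}\right)=1$. I would prove finiteness of $\rho(u/\lambda)$ from $\rho(u)<\infty$ via the elementary pointwise estimate $\lambda^{-p(x)}\le\max\{\lambda^{-p^{-}},\lambda^{-p^{+}}\}$, which also supplies a dominating function on any compact $\lambda$-interval (so continuity follows by dominated convergence); strict monotonicity is clear since $p(x)>1$; and the two limits follow from monotone convergence / Fatou applied to $|u(x)/\lambda|^{p(x)}$, using $u\neq 0$ for the blow-up as $\lambda\to0^{+}$.

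Granting this, item $(i)$ is immediate: $|u|_{p(x)}\le 1\iff\rho(u)\le 1$ is a direct reading of the infimum, and the strict inequalities and the equalities are obtained by inspecting the sign of $\rho(u/\lambda)-1$ for $\lambda$ slightly below and slightly above $|u|_{p(x)}$ and at $\lambda=1$, invoking continuity to pass to the endpoints. The case $u=0$ is trivial throughout.

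For item $(ii)$ I would set $\lambda_{0}:=|u|_{p(x)}$ and use $\rho(u/\lambda_{0})=1$, i.e. $\int_{\Omega}\lambda_{0}^{-p(x)}|u(x)|^{p(x)}\,dx=1$. The point is the pointwise comparison $t^{p^{-}}\le t^{p(x)}\le t^{p^{+}}$ when $t\ge1$, with the inequalities reversed when $0<t\le1$, applied with $t=1/\lambda_{0}$. If $\lambda_{0}>1$ then $1/\lambda_{0}<1$, so $\lambda_{0}^{-p^{+}}\rho(u)\le 1\le\lambda_{0}^{-p^{-}}\rho(u)$, which rearranges to $\lambda_{0}^{p^{-}}\le\rho(u)\le\lambda_{0}^{p^{+}}$; if $\lambda_{0}\le1$ the reversed comparison gives $\lambda_{0}^{p^{+}}\le\rho(u)\le\lambda_{0}^{p^{-}}$. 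This is exactly the asserted pair of estimates.

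Finally, item $(iii)$ follows from $(ii)$ and $(i)$: if $|u_{n}-u|_{p(x)}\to0$ then eventually $|u_{n}-u|_{p(x)}\le1$, whence $\rho(u_{n}-u)\le|u_{n}-u|_{p(x)}^{p^{-}}\to0$; conversely, if $\rho(u_{n}-u)\to0$ then eventually $\rho(u_{n}-u)\le1$, so by $(i)$ $|u_{n}-u|_{p(x)}\le1$, and then $|u_{n}-u|_{p(x)}^{p^{+}}\le\rho(u_{n}-u)\to0$, giving $|u_{n}-u|_{p(x)}\to0$. The only genuinely nontrivial step — and the one I expect to be the main obstacle — is the attainment identity $\rho\!\left(u/|u|_{p(x)}\right)=1$ together with the continuity of $\lambda\mapsto\rho(u/\lambda)$; this is precisely where the standing hypothesis $p\in C_{+}(\overline{\Omega})$ (in particular $p^{+}<\infty$) enters.
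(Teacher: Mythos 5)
Your argument is correct and is essentially the standard proof of these modular--norm relations (the paper does not prove Proposition \ref{Prop:2.2} itself but defers to the cited references, e.g.\ Fan--Zhao and Diening et al., where precisely this argument appears). The key step you isolate --- that for $u\neq 0$ the map $\lambda\mapsto\rho(u/\lambda)$ is finite, continuous and strictly decreasing with limits $+\infty$ and $0$, so that $\rho\bigl(u/|u|_{p(x)}\bigr)=1$, which hinges on $1<p^{-}\leq p^{+}<\infty$ --- is exactly the mechanism used there, and your deductions of $(i)$--$(iii)$ from it are sound.
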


\begin{proposition}\label{Prop:2.2bb}
Let $p_1(x)$ and $p_2(x)$ be measurable functions such that $p_1\in L^{\infty}(\Omega )$ and $1\leq p_1(x)p_2(x)\leq \infty$ for a.e. $x\in \Omega$. Let $u\in L^{p_2(x)}(\Omega ),~u\neq 0$. Then
\begin{itemize}
\item[$(i)$] $\left\vert u\right\vert _{p_1(x)p_2(x)}\leq 1\text{\ }\Longrightarrow
\left\vert u\right\vert _{p_1(x)p_2(x))}^{p_1^{+}}\leq \left\vert \left\vert
u\right\vert ^{p_1(x)}\right\vert _{p_2(x)}\leq \left\vert
u\right\vert _{p_1(x)p_2(x)}^{p^{-}}$
\item[$(ii)$] $\left\vert u\right\vert _{p_1(x)p_2(x)}> 1\ \Longrightarrow \left\vert
u\right\vert _{p_1(x)p_2(x)}^{p_1^{-}}\leq \left\vert \left\vert u\right\vert
^{p_1(x)}\right\vert _{p_2(x) }\leq \left\vert u\right\vert
_{p_1(x)p_2(x)}^{p_1^{+}}$
\item[$(iii)$] In particular, if $p_1(x)=p$ is constant then
\begin{equation*}
\left\vert \left\vert u\right\vert ^{p}\right\vert _{p_2(x)}=\left\vert
u\right\vert _{pp_2(x)}^{p}.
\end{equation*}
\end{itemize}
\end{proposition}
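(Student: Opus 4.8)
The plan is to establish, for $u\ne0$, the single two--sided bound
\[
|u|_{p_1(x)p_2(x)}^{\,p_1^-\wedge p_1^+}\ \le\ \bigl|\,|u|^{p_1(x)}\,\bigr|_{p_2(x)}\ \le\ |u|_{p_1(x)p_2(x)}^{\,p_1^-\vee p_1^+},
\]
from which (i) and (ii) follow at once by unwinding the $\vee$/$\wedge$ notation according to whether $|u|_{p_1(x)p_2(x)}<1$ or $>1$; part (iii) is the degenerate case $p_1^-=p_1^+=p$, and can in any event be obtained directly from the substitution $\lambda=\mu^{p}$ in the Luxemburg infimum, which gives $\bigl|\,|u|^{p}\,\bigr|_{p_2(x)}=\inf\{\mu^{p}:\int_\Omega(|u|/\mu)^{pp_2(x)}\,dx\le1\}=|u|_{pp_2(x)}^{p}$. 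Write $\beta:=|u|_{p_1(x)p_2(x)}$ and $\gamma:=\bigl|\,|u|^{p_1(x)}\,\bigr|_{p_2(x)}$; since $\int_\Omega|u|^{p_1(x)p_2(x)}\,dx=\int_\Omega(|u|^{p_1(x)})^{p_2(x)}\,dx$ we have $u\in L^{p_1(x)p_2(x)}(\Omega)\Leftrightarrow|u|^{p_1(x)}\in L^{p_2(x)}(\Omega)$, so $\beta,\gamma\in(0,\infty)$. Everything rests on the pointwise identity $\bigl(|u(x)|^{p_1(x)}/\lambda\bigr)^{p_2(x)}=|u(x)|^{p_1(x)p_2(x)}\lambda^{-p_2(x)}$, the monotonicity of $s\mapsto\lambda^{s}$ (increasing for $\lambda\ge1$, decreasing for $\lambda<1$), and the bounds $p_1^-\le p_1(x)\le p_1^+$ supplied by $p_1\in L^\infty(\Omega)$.

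For the \emph{upper} bound I would test $\gamma$ at $\lambda=\beta^{p_1^+}$ when $\beta\ge1$, and at $\lambda=\beta^{p_1^-}$ when $\beta\le1$. In the first case $\lambda\ge1$ and $\lambda^{p_2(x)}=\beta^{p_1^+p_2(x)}\ge\beta^{p_1(x)p_2(x)}$, so the identity together with the norm--modular relation (cf. Proposition \ref{Prop:2.2}) gives $\int_\Omega(|u|^{p_1(x)}/\lambda)^{p_2(x)}\,dx\le\int_\Omega(|u|/\beta)^{p_1(x)p_2(x)}\,dx\le1$, i.e. $\gamma\le\beta^{p_1^+}$; the second case is identical, the inequality $\beta^{p_1^-p_2(x)}\ge\beta^{p_1(x)p_2(x)}$ now holding because $\beta\le1$, and yields $\gamma\le\beta^{p_1^-}$. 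Together these are the right--hand inequality of the sandwich.

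For the \emph{lower} bound I would test with parameters strictly below the target and pass to the limit. If $\beta>1$, fix $\lambda$ with $1\le\lambda<\beta^{p_1^-}$; then $\lambda^{1/p_1^-}<\beta$, so by the definition of the Luxemburg norm $\int_\Omega(|u|/\lambda^{1/p_1^-})^{p_1(x)p_2(x)}\,dx>1$, while $\lambda\ge1$ and $p_1(x)/p_1^-\ge1$ give $\lambda^{p_1(x)p_2(x)/p_1^-}\ge\lambda^{p_2(x)}$, whence $\int_\Omega(|u|^{p_1(x)}/\lambda)^{p_2(x)}\,dx\ge\int_\Omega(|u|/\lambda^{1/p_1^-})^{p_1(x)p_2(x)}\,dx>1$; this forces $\lambda\le\gamma$, and letting $\lambda\uparrow\beta^{p_1^-}$ gives $\gamma\ge\beta^{p_1^-}$. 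The mirror--image argument for $\beta<1$ (fix $0<\lambda<\beta^{p_1^+}$, so $\lambda\le1$; use $p_1(x)/p_1^+\le1$; let $\lambda\uparrow\beta^{p_1^+}$) gives $\gamma\ge\beta^{p_1^+}$, and the borderline $\beta=1$ is immediate: $\int_\Omega|u|^{p_1(x)p_2(x)}\,dx\le1$ forces $\gamma\le1$, while $\gamma<1$ is impossible since $u\ne0$. This is the left--hand inequality, and (i)--(iii) follow.

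The step I expect to be the main obstacle is precisely this asymmetry of the Luxemburg infimum: the norm--modular relation delivers, essentially for free, the bound $\int_\Omega(|u|/\beta)^{p_1(x)p_2(x)}\,dx\le1$ used for the upper estimate, whereas the reverse strict inequality needed for the lower estimate does not hold at $\lambda=\beta$ and must be extracted by testing at parameters strictly below the target and then passing to the limit --- all the while keeping track of which branch of the dichotomy ($t<1$ versus $t\ge1$) governs both the base $\beta$ and the auxiliary parameter $\lambda$ in each sub-case. A secondary nuisance is the possibility that $p_1(x)p_2(x)=+\infty$ on a set of positive measure: there $p_1\in L^\infty(\Omega)$ forces $p_2=+\infty$, and on that part of $\Omega$ the stated inequalities reduce to elementary essential-supremum estimates, so I would split $\Omega$ into $\{p_1p_2<\infty\}$ and $\{p_1p_2=\infty\}$ and treat the latter separately rather than inline.
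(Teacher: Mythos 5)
The paper itself gives no proof of Proposition \ref{Prop:2.2bb}: it is quoted as a standard fact of variable exponent spaces, with the reader referred to the cited monographs (Cruz-Uribe--Fiorenza, Diening et al., Edmunds--R\'akosn\'{\i}k, Fan--Zhao). Your direct argument from the Luxemburg definition is essentially the standard textbook derivation, and it is correct: the identity $\bigl(|u|^{p_1(x)}/\lambda\bigr)^{p_2(x)}=|u|^{p_1(x)p_2(x)}\lambda^{-p_2(x)}$, the unit-ball property $\rho\bigl(u/|u|_{p_1(x)p_2(x)}\bigr)\le 1$ (your appeal to Proposition \ref{Prop:2.2}) for the upper estimates, and the non-admissibility of any $\lambda$ strictly below the infimum (giving modular $>1$) followed by $\lambda\uparrow$ the target for the lower estimates, all check out in each of the four sub-cases, as does the change of variable $\lambda=\mu^{p}$ for (iii). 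Packaging the four cases into one sandwich via the paper's $\vee/\wedge$ notation is a nice economy, and your remark about splitting off the set where $p_1p_2=\infty$ is the right way to handle that fringe case.

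One sentence is not a proof as written: at the borderline $\beta:=|u|_{p_1(x)p_2(x)}=1$ you assert that ``$\gamma<1$ is impossible since $u\neq0$''; the hypothesis $u\neq0$ alone does not exclude $\gamma<1$ (it only gives $\gamma>0$), so as stated this step is unjustified. It is, however, immediately repairable by your own machinery: either run your $\beta<1$ lower-bound argument verbatim for $\beta\le 1$ (take $0<\lambda<\beta^{p_1^+}\le1$, note $\lambda^{1/p_1^+}<\beta$ forces the modular of $u/\lambda^{1/p_1^+}$ to exceed $1$, compare exponents using $\lambda\le1$, conclude $\gamma\ge\lambda$ and let $\lambda\uparrow\beta^{p_1^+}=1$), or argue by scaling that $\gamma<1$ would give some $\mu<1$ with $\rho_{p_2}\bigl(|u|^{p_1}/\mu\bigr)\le1$ and hence $\rho_{p_1p_2}\bigl(u/\mu^{1/p_1^+}\bigr)\le1$, contradicting $\beta=1$. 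With that one-line repair the proof is complete and self-contained.
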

The variable exponent Sobolev space $W^{1,p(x)}( \Omega)$ is defined by
\begin{equation*}
W^{1,p(x)}( \Omega) =\{u\in L^{p(x) }(\Omega) : |\nabla u| \in L^{p(x)}(\Omega)\},
\end{equation*}
with the norm
\begin{equation*}
\|u\|_{1,p(x)}=|u|_{p(x)}+|\nabla u|_{p(x)},\\\ \forall u\in W^{1,p(x)}(\Omega).
\end{equation*}
The space $W_{0}^{1,h(x)}(\Omega)$ is defined as the closure of $C_{0}^{\infty }(\Omega )$ in $W^{1,p(x)}(\Omega)$.\\
\begin{proposition}\label{Prop:2.4} If $p\in C_+(\overline{\Omega })$ and $ p^{+}<\infty$, then the spaces $L^{p(x) }( \Omega)$, $W^{1,p(x)}(\Omega)$, and $W_{0}^{1,p(x)}(\Omega)$ are separable and reflexive Banach spaces.
\end{proposition}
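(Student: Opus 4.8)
The plan is to establish the four assertions for $L^{p(x)}(\Omega)$ first and then transfer them to $W^{1,p(x)}(\Omega)$ through an isometric embedding into a product space, using Proposition~\ref{Prop:2.2} as the main bridge between the norm and the modular throughout.

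\textbf{Banach property.} For completeness of $L^{p(x)}(\Omega)$ I would take a Cauchy sequence $(u_n)$, extract a subsequence with $|u_{n_{k+1}}-u_{n_k}|_{p(x)}\le 2^{-k}$, control the modular of the partial sums $\sum_k|u_{n_{k+1}}-u_{n_k}|$ via Proposition~\ref{Prop:2.2}, deduce a.e.\ convergence of this series to a function $u\in L^{p(x)}(\Omega)$ by Fatou on the modular, and finally upgrade a.e.\ convergence to norm convergence using Proposition~\ref{Prop:2.2}$(iii)$. Completeness of $W^{1,p(x)}(\Omega)$ then follows by applying this to $u_n$ and to each component of $\nabla u_n$, noting that $L^{p(x)}(\Omega)\hookrightarrow L^1_{\mathrm{loc}}(\Omega)$ (since $|\Omega|<\infty$ and $p(x)\ge p^->1$), so the $L^{p(x)}$-limit of the gradients is the distributional gradient of the limit.

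\textbf{Separability and reflexivity of $L^{p(x)}(\Omega)$.} Since $p^+<\infty$, the modular is controlled by a fixed power, so dominated convergence plus Proposition~\ref{Prop:2.2}$(iii)$ gives density of simple functions; approximating the underlying measurable sets by a countable generating ring and the values by rationals yields a countable dense set, hence separability. Reflexivity is the crux: assuming $1<p^-\le p^+<\infty$ I would prove that $L^{p(x)}(\Omega)$ is uniformly convex by establishing variable-exponent Clarkson-type inequalities — splitting $\Omega$ into $\{p(x)\ge 2\}$ and $\{p(x)<2\}$ and using convexity/smoothness estimates for $t\mapsto t^{p(x)}$ that are uniform in $x$ precisely because $p$ is bounded away from $1$ and $\infty$ — and then invoke the Milman–Pettis theorem. (An equivalent route is to identify the dual $\big(L^{p(x)}(\Omega)\big)^*\cong L^{p'(x)}(\Omega)$ with $p'(x)=p(x)/(p(x)-1)$ via Hölder and a Radon–Nikodým argument, and check the canonical map into the bidual is surjective.)

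\textbf{Transfer to $W^{1,p(x)}(\Omega)$.} The map $T u=(u,\nabla u)$ is a linear isometry of $\big(W^{1,p(x)}(\Omega),\|\cdot\|_{1,p(x)}\big)$ onto its image in $L^{p(x)}(\Omega)\times\big(L^{p(x)}(\Omega)\big)^N$, and that image is closed by the limit-identification argument above. A finite product of separable reflexive Banach spaces is again separable and reflexive, and closed subspaces inherit both properties; hence $W^{1,p(x)}(\Omega)$ is a separable reflexive Banach space. The only genuinely non-routine point is the reflexivity of $L^{p(x)}(\Omega)$: the uniform-convexity constants must be made independent of $x$, which is exactly where the hypotheses $p^->1$ and $p^+<\infty$ are indispensable (they fail, e.g., when $p^-=1$); every other step is a combination of Proposition~\ref{Prop:2.2} with standard measure-theoretic density and completeness arguments.
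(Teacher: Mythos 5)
Your outline is correct and coincides with the standard treatment: the paper gives no proof of this proposition, citing instead the classical references on variable exponent spaces, and your argument (completeness and separability via modular estimates and Proposition~\ref{Prop:2.2}, reflexivity via uniform convexity from variable-exponent Clarkson-type inequalities or equivalently the $L^{p'(x)}$ duality, then transfer to $W^{1,p(x)}(\Omega)$ through the embedding $u\mapsto(u,\nabla u)$ onto a closed subspace of a finite product) is exactly the proof found in those sources. The only cosmetic remark is that $u\mapsto(u,\nabla u)$ need not be an isometry for the product norm you chose, but it is an isomorphism onto a closed subspace, which is all the transfer of separability and reflexivity requires.
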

Furthermore,  Poincar\'{e} inequality holds in $W_{0}^{1,p(x)}(\Omega)$ \cite{fan2001spaces}; that is, there exists a
positive constant $c$ independent of $u$ such that
\begin{equation*}
|u|_{p(x)}\leq c|\nabla u|_{p(x)},\quad \forall u\in W_{0}^{1,p(x)}(\Omega),
\end{equation*}
which implies that $|\nabla u|_{p(x)}$ is an equivalent norm in $W_{0}^{1,p(x)}(\Omega)$. Therefore, on $W_{0}^{1,p(x)}(\Omega)$ we can define an equivalent norm $\|\cdot\|$ such that
\begin{equation*}
\|u\| =|\nabla u|_{p(x)}.
\end{equation*}

\begin{proposition}\label{Prop:2.5} Let $q\in C(\overline{\Omega })$. If $1\leq q(x) <p^{\ast }(x)$ for all $x\in
\overline{\Omega }$, then the embeddings $W^{1,p(x)}(\Omega) \hookrightarrow L^{q(x) }(\Omega)$ and $W_0^{1,p(x)}(\Omega) \hookrightarrow L^{q(x) }(\Omega)$  are compact and continuous, where
$p^{\ast }( x) =\left\{\begin{array}{cc}
\frac{Np(x) }{N-p( x) } & \text{if }p(x)<N, \\
+\infty & \text{if }p( x) \geq N.
\end{array}
\right. $
\end{proposition}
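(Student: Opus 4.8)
The plan is to reduce everything to the classical Rellich--Kondrachov compactness theorem by a localization argument, using only the continuity of $p,q$ on $\overline{\Omega}$ together with the \emph{strict} inequality $q(x)<p^{*}(x)$; no finer regularity of $p$ (such as log-H\"older continuity) is needed precisely because of this strictness.

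\textbf{Step 1 (interlacing constant exponents on small balls).} Fix $x_{0}\in\overline{\Omega}$. Because $p$ and $q$ are continuous, shrinking an open ball $B$ centred at $x_{0}$ drives $p_{B}^{-}:=\inf_{B\cap\Omega}p$ to $p(x_{0})$ and $q_{B}^{+}:=\sup_{B\cap\Omega}q$ to $q(x_{0})$; since $t\mapsto Nt/(N-t)$ is continuous, one can therefore pick $B=B_{x_{0}}$ and a finite constant $s_{x_{0}}>1$ with $q_{B}^{+}\le s_{x_{0}}<(p_{B}^{-})^{*}$ when $p_{B}^{-}<N$, and with $q_{B}^{+}\le s_{x_{0}}<\infty$ arbitrary when $p_{B}^{-}\ge N$. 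By compactness of $\overline{\Omega}$ extract a finite subcover $B_{1},\dots,B_{m}$ with associated constants $p_{i}^{-},q_{i}^{+},s_{i}$, taking each $B_{i}$ small enough (adapted to the Lipschitz charts of $\partial\Omega$) that $\Omega_{i}:=B_{i}\cap\Omega$ is a bounded Lipschitz domain, hence an extension domain for the classical Sobolev spaces.

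\textbf{Step 2 (the local chain of embeddings).} On $\Omega_{i}$ one has $p(x)\ge p_{i}^{-}$, so the usual inclusion $L^{s(x)}(D)\hookrightarrow L^{t(x)}(D)$ for $t\le s$ on a bounded set $D$ (applied to $u$ and to $\nabla u$) gives a continuous restriction $W^{1,p(x)}(\Omega)\hookrightarrow W^{1,p_{i}^{-}}(\Omega_{i})$. The classical Rellich--Kondrachov theorem on the Lipschitz domain $\Omega_{i}$ yields a compact embedding $W^{1,p_{i}^{-}}(\Omega_{i})\hookrightarrow L^{s_{i}}(\Omega_{i})$ (for $W_{0}^{1,p(x)}(\Omega)$ one instead extends by zero to a large ball and needs no regularity of the domain). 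Finally $q(x)\le q_{i}^{+}\le s_{i}$ on the bounded set $\Omega_{i}$ gives a continuous inclusion $L^{s_{i}}(\Omega_{i})\hookrightarrow L^{q(x)}(\Omega_{i})$. Composing, $W^{1,p(x)}(\Omega)\hookrightarrow L^{q(x)}(\Omega_{i})$ is compact, hence also bounded, for every $i$.

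\textbf{Step 3 (patching).} Here one must be careful because the Luxemburg norm is not additive over the covering; I pass instead to the modular $\rho_{q}(v;D):=\int_{D}|v(x)|^{q(x)}\,dx$, which satisfies $\rho_{q}(v;\Omega)\le\sum_{i=1}^{m}\rho_{q}(v;\Omega_{i})$ since $\Omega\subset\bigcup_{i}\Omega_{i}$. For continuity: for $u\in W^{1,p(x)}(\Omega)$ each $\rho_{q}(u;\Omega_{i})$ is controlled by $\|u\|_{1,p(x)}$ via Step 2 and Proposition~\ref{Prop:2.2}, hence so is $\rho_{q}(u;\Omega)$, and Proposition~\ref{Prop:2.2} turns this modular bound back into $|u|_{q(x)}\le C\|u\|_{1,p(x)}$. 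For compactness: given a bounded sequence $(u_{n})$ in $W^{1,p(x)}(\Omega)$, successively extract subsequences convergent in $L^{q(x)}(\Omega_{1})$, then in $L^{q(x)}(\Omega_{2})$, and so on through the finite list, producing one subsequence (not relabelled) and a limit $u$ (the local limits agree on overlaps by a.e.\ convergence) with $u_{n}\to u$ in every $L^{q(x)}(\Omega_{i})$; by Proposition~\ref{Prop:2.2} this means $\rho_{q}(u_{n}-u;\Omega_{i})\to0$ for each $i$, whence $\rho_{q}(u_{n}-u;\Omega)\to0$ and, by Proposition~\ref{Prop:2.2} once more, $u_{n}\to u$ in $L^{q(x)}(\Omega)$. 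Restricting the argument to $W_{0}^{1,p(x)}(\Omega)\subset W^{1,p(x)}(\Omega)$ handles that space. The main technical obstacle is this patching, where the non-additivity of the Luxemburg norm forces the argument through modulars; a secondary point is the exponent bookkeeping in Step 1, including the subcase $p_{B}^{-}\ge N$ where $p^{*}=\infty$.
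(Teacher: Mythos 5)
The paper does not prove Proposition~\ref{Prop:2.5} at all: it is quoted as a known result, with the proof delegated to the standard references on variable exponent spaces cited at the start of Section~2 (Fan--Zhao, Diening et al., Edmunds et al.). Your localization argument --- interlace constant exponents $q_{B}^{+}\le s<(p_{B}^{-})^{*}$ on small balls via continuity and strictness of $q<p^{*}$, apply classical Rellich--Kondrachov on each piece, and patch through the modular using Proposition~\ref{Prop:2.2} because the Luxemburg norm is not additive over a covering --- is exactly the classical proof found in those references, and it is correct, including the treatment of continuity (the non-homogeneous modular-to-norm bound suffices since a linear map bounded on the unit ball is continuous) and of compactness by finite successive extraction. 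Two points you gloss over are standard but worth a word: ensuring $B_{i}\cap\Omega$ is an extension (Lipschitz) domain near the boundary requires choosing the balls, or cubes, adapted to the local Lipschitz charts, and at points where $p(x_{0})=N$ the map $t\mapsto Nt/(N-t)$ is not continuous at $t=N$, so the interlacing there rests on $(p_{B}^{-})^{*}\to+\infty$ as $p_{B}^{-}\uparrow N$ rather than on continuity; neither affects the validity of the argument.
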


Throughout the paper, we assume the following.
\begin{itemize}
\item[$(H_1)$] $p,q \in C_+(\overline{\Omega})$, $p(x)<N$ and $p(x)<q(x)$ for all $x\in \overline{\Omega }$ with $q^+<\min\{p^*(x),N\}$ and $r^+<\frac{p^*(x)}{2}$;
\item[$(H_2)$] $\mu\in L^\infty(\Omega)$ such that $\mu(\cdot)\geq 0$.
\end{itemize}
To address problems (\ref{e1.1}) and (\ref{e1.1a}), it is necessary to utilize the theory of the Musielak-Orlicz Sobolev space $W_0^{1,\mathcal{H}}(\Omega)$. Therefore, we subsequently introduce the double-phase operator, the Musielak-Orlicz space, and the Musielak-Orlicz Sobolev space in turn.\\

Let $\mathcal{H}:\Omega\times [0,\infty]\to [0,\infty]$ be the nonlinear function, i.e. the \textit{double phase operator}, defined by
\[
\mathcal{H}(x,t)=t^{p(x)}+\mu(x)t^{q(x)}\ \text{for all}\ (x,t)\in \Omega\times [0,\infty].
\]
Then the corresponding modular $\rho_\mathcal{H}(\cdot)$ is given by
\[
\displaystyle\rho_\mathcal{H}(u)=\int_\Omega\mathcal{H}(x,|u|)dx=
\int_\Omega\left(|u|^{p(x)}+\mu(x)|u|^{q(x)}\right)dx.
\]
The \textit{Musielak-Orlicz space} $L^{\mathcal{H}}(\Omega)$, is defined by
\[
L^{\mathcal{H}}(\Omega)=\left\{u:\Omega\to \mathbb{R}\,\, \text{measurable}:\,\, \rho_{\mathcal{H}}(u)<+\infty\right\},
\]
endowed with the Luxemburg norm
\[
\|u\|_{\mathcal{H}}=\inf\left\{\zeta>0: \rho_{\mathcal{H}}\left(\frac{u}{\zeta}\right)\leq 1\right\}.
\]
Analogous to Proposition \ref{Prop:2.2}, there are similar relationship between the modular $\rho_{\mathcal{H}}(\cdot)$ and the norm $\|\cdot\|_{\mathcal{H}}$, see \cite[Proposition 2.13]{crespo2022new} for a detailed proof.

\begin{proposition}\label{Prop:2.2a}
Assume $(H_1)$ hold, and $u\in L^{\mathcal{H}}(\Omega)$. Then
\begin{itemize}
\item[$(i)$] If $u\neq 0$, then $\|u\|_{\mathcal{H}}=\lambda\Leftrightarrow \rho_{\mathcal{H}}(\frac{u}{\zeta})=1$,
\item[$(ii)$] $\|u\|_{\mathcal{H}}<1\ (\text{resp.}\ >1, =1)\Leftrightarrow \rho_{\mathcal{H}}(\frac{u}{\zeta})<1\ (\text{resp.}\ >1, =1)$,
\item[$(iii)$] If $\|u\|_{\mathcal{H}}<1\Rightarrow \|u\|_{\mathcal{H}}^{q^+}\leq \rho_{\mathcal{H}}(u)\leq \|u\|_{\mathcal{H}}^{p^-}$,
\item[$(iv)$]If $\|u\|_{\mathcal{H}}>1\Rightarrow \|u\|_{\mathcal{H}}^{p^-}\leq \rho_{\mathcal{H}}(u)\leq \|u\|_{\mathcal{H}}^{q^+}$,
\item[$(v)$] $\|u\|_{\mathcal{H}}\to 0\Leftrightarrow \rho_{\mathcal{H}}(u)\to 0$,
\item[$(vi)$]$\|u\|_{\mathcal{H}}\to +\infty\Leftrightarrow \rho_{\mathcal{H}}(u)\to +\infty$,
\item[$(vii)$] $\|u\|_{\mathcal{H}}\to 1\Leftrightarrow \rho_{\mathcal{H}}(u)\to 1$,
\item[$(viii)$] If $u_n\to u$ in $L^{\mathcal{H}}(\Omega)$, then $\rho_{\mathcal{H}}(u_n)\to\rho_{\mathcal{H}}(u)$.
\end{itemize}
\end{proposition}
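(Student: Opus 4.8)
The plan is to reduce all eight assertions to two elementary scaling inequalities for the modular and then run the standard Musielak--Orlicz bootstrap. First I would record the pointwise homogeneity bounds for $\mathcal{H}$: since $1<p^-\le p(x)\le p^+<q^-\le q(x)\le q^+$ by $(H_1)$ and $\mu\ge0$ by $(H_2)$, writing $\mathcal{H}(x,\sigma t)=\sigma^{p(x)}t^{p(x)}+\mu(x)\sigma^{q(x)}t^{q(x)}$ and bounding each power of $\sigma$ by its extremal exponent yields $\sigma^{p^-}\mathcal{H}(x,t)\le\mathcal{H}(x,\sigma t)\le\sigma^{q^+}\mathcal{H}(x,t)$ for $\sigma\ge1$ and the reversed chain $\sigma^{q^+}\mathcal{H}(x,t)\le\mathcal{H}(x,\sigma t)\le\sigma^{p^-}\mathcal{H}(x,t)$ for $0<\sigma\le1$. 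Integrating over $\Omega$ gives, for $u\in L^{\mathcal{H}}(\Omega)$, that $\sigma^{p^-}\rho_{\mathcal{H}}(u)\le\rho_{\mathcal{H}}(\sigma u)\le\sigma^{q^+}\rho_{\mathcal{H}}(u)$ when $\sigma\ge1$ and the reversed inequalities when $0<\sigma\le1$. I would also note that $t\mapsto\mathcal{H}(x,t)$ is continuous, nondecreasing and convex with $\mathcal{H}(x,0)=0$, so $\rho_{\mathcal{H}}$ is convex, and that — using the dominated convergence theorem with the scaling bounds to produce an integrable majorant — the function $\varphi_u(\sigma):=\rho_{\mathcal{H}}(u/\sigma)$ is continuous and nonincreasing on $(0,\infty)$, tends to $0$ as $\sigma\to+\infty$, and, when $u\neq0$ (so $\rho_{\mathcal{H}}(u)>0$), tends to $+\infty$ as $\sigma\to0^+$ since $\varphi_u(\sigma)\ge\sigma^{-p^-}\rho_{\mathcal{H}}(u)$ for $\sigma\le1$.

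Given this, (i) would follow from the intermediate value theorem: $\{\zeta>0:\varphi_u(\zeta)\le1\}=[\lambda,\infty)$ with $\varphi_u(\lambda)=1$, so $\|u\|_{\mathcal{H}}=\lambda$ and the value $1$ is attained. Then (ii) would be immediate from (i) and the scaling bounds (the case $u=0$ being trivial): if $0<\lambda=\|u\|_{\mathcal{H}}<1$ then $\rho_{\mathcal{H}}(u)=\rho_{\mathcal{H}}(\lambda\cdot(u/\lambda))\le\lambda^{p^-}\rho_{\mathcal{H}}(u/\lambda)=\lambda^{p^-}<1$, and conversely; the cases ``$=1$'' and ``$>1$'' are handled identically. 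For (iii)--(iv), with $u\neq0$ and $\lambda=\|u\|_{\mathcal{H}}$ I would write $\rho_{\mathcal{H}}(u)=\rho_{\mathcal{H}}(\lambda\cdot(u/\lambda))$ and apply the scaling inequality appropriate to $\lambda$ together with $\rho_{\mathcal{H}}(u/\lambda)=1$: the $0<\sigma\le1$ version when $\lambda<1$ gives $\lambda^{q^+}\le\rho_{\mathcal{H}}(u)\le\lambda^{p^-}$, and the $\sigma\ge1$ version when $\lambda>1$ gives $\lambda^{p^-}\le\rho_{\mathcal{H}}(u)\le\lambda^{q^+}$. Items (v), (vi), (vii) are then pure bookkeeping: once $\|u_n\|_{\mathcal{H}}$ is eventually $<1$ (resp.\ $>1$) the two-sided power estimates of (iii) (resp.\ (iv)) squeeze $\rho_{\mathcal{H}}(u_n)$ to the claimed limit, and using (ii) one runs the converse; a sequence accumulating at $1$ is split into its subsequences below and above $1$ and each is squeezed separately.

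The only step I expect to need a genuine idea is (viii), which I would prove by exploiting the convexity of $\mathcal{H}(x,\cdot)$ through the identity $a+b=(1-\varepsilon)\tfrac{a}{1-\varepsilon}+\varepsilon\tfrac{b}{\varepsilon}$ for $\varepsilon\in(0,1)$. From $|u_n|\le|u|+|u_n-u|$, monotonicity and convexity give $\mathcal{H}(x,|u_n|)\le(1-\varepsilon)\,\mathcal{H}\!\left(x,\tfrac{|u|}{1-\varepsilon}\right)+\varepsilon\,\mathcal{H}\!\left(x,\tfrac{|u_n-u|}{\varepsilon}\right)$; integrating and using that $\|u_n-u\|_{\mathcal{H}}<\varepsilon$ for large $n$ (so that $\rho_{\mathcal{H}}((u_n-u)/\varepsilon)\le(\|u_n-u\|_{\mathcal{H}}/\varepsilon)^{p^-}\to0$ by (iii)) yields $\limsup_n\rho_{\mathcal{H}}(u_n)\le(1-\varepsilon)\rho_{\mathcal{H}}(u/(1-\varepsilon))$; letting $\varepsilon\to0^+$ and applying the dominated convergence theorem (the integrands decrease to $\mathcal{H}(x,|u|)$) gives $\limsup_n\rho_{\mathcal{H}}(u_n)\le\rho_{\mathcal{H}}(u)$. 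Symmetrically, from $|u|\le|u_n|+|u_n-u|$ and the $\sigma\ge1$ scaling bound one gets $\rho_{\mathcal{H}}(u)\le(1-\varepsilon)^{1-q^+}\rho_{\mathcal{H}}(u_n)+\varepsilon\,\rho_{\mathcal{H}}((u_n-u)/\varepsilon)$; taking $\liminf_n$ and then $\varepsilon\to0^+$ gives $\rho_{\mathcal{H}}(u)\le\liminf_n\rho_{\mathcal{H}}(u_n)$, which closes (viii). The main obstacle is precisely calibrating this $\varepsilon$-splitting so that the error term $\rho_{\mathcal{H}}((u_n-u)/\varepsilon)$ is absorbed by the norm convergence while the main term collapses to $\rho_{\mathcal{H}}(u)$; everything else is mechanical manipulation of the two scaling inequalities. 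Alternatively, one may simply invoke \cite[Proposition 2.13]{crespo2022new}.
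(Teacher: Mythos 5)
Your argument is correct, but it is not the route the paper takes: the paper offers no proof of this proposition at all, it simply points to Proposition 2.13 of Crespo-Blanco--Gasi\'nski--Papageorgiou--Winkert (the reference \cite{crespo2022new}), which is the ``alternative'' you mention in your last sentence. What you supply instead is a self-contained elementary proof, and it is sound: the two scaling chains $\sigma^{p^-}\mathcal{H}(x,t)\le\mathcal{H}(x,\sigma t)\le\sigma^{q^+}\mathcal{H}(x,t)$ for $\sigma\ge 1$ (reversed for $0<\sigma\le 1$) are exactly what $(H_1)$--$(H_2)$ give, since $p^-\le p(x),q(x)\le q^+$ and $\mu\ge 0$; items (i)--(iv) then follow by evaluating $\rho_{\mathcal{H}}$ at $\lambda\cdot(u/\lambda)$ with $\rho_{\mathcal{H}}(u/\lambda)=1$, items (v)--(vii) are the squeezing/subsequence bookkeeping you describe, and your $\varepsilon$-splitting with convexity for (viii) (including the constant $(1-\varepsilon)^{1-q^+}$ in the lower-bound direction) is the standard and correct way to pass from norm convergence to modular convergence without invoking a $\Delta_2$-type lemma by name. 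What your approach buys is transparency and slightly weaker hypotheses: only $1<p^-\le q^+<\infty$ and $\mu\ge 0$ are used, none of the structural conditions $q^+<N$, $q^+<p^*$, etc.\ in $(H_1)$ matter here. What the paper's citation buys is brevity and consistency, since \cite{crespo2022new} is also the source for the embedding results and the $(S_+)$-property used later, and its proof rests on the same underlying facts (convexity of the modular and the $\Delta_2$ behaviour encoded in your scaling bounds).

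Two small points if you were to write this out in full. In (i) you prove attainment of the value $1$ at $\lambda=\|u\|_{\mathcal{H}}$, but the reverse implication ($\rho_{\mathcal{H}}(u/\lambda)=1\Rightarrow\|u\|_{\mathcal{H}}=\lambda$) needs the \emph{strict} decrease of $\sigma\mapsto\rho_{\mathcal{H}}(u/\sigma)$, which your scaling lower bound delivers (for $\zeta<\lambda$ one gets $\rho_{\mathcal{H}}(u/\zeta)\ge(\lambda/\zeta)^{p^-}>1$); say so explicitly, since (ii) leans on it. Also note that you are silently correcting the statement's typos (the norm value should be $\zeta=\lambda$ in (i), and (ii) concerns $\rho_{\mathcal{H}}(u)$, not $\rho_{\mathcal{H}}(u/\zeta)$), which is the intended reading.
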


\medskip

The \textit{Musielak-Orlicz Sobolev space} $W^{1,\mathcal{H}}(\Omega)$ is defined by
\[
W^{1,\mathcal{H}}(\Omega)=\left\{u\in L^{\mathcal{H}}(\Omega):
|\nabla u|\in L^{\mathcal{H}}(\Omega)\right\},
\]
and equipped with the norm
\[
\|u\|_{1,\mathcal{H}}=\|\nabla u\|_{\mathcal{H}}+\|u\|_{\mathcal{H}},
\]
where $\|\nabla u\|_{\mathcal{H}}=\|\,|\nabla u|\,\|_{\mathcal{H}}$.\\
\medskip
\noindent
The space $W_0^{1,\mathcal{H}}(\Omega)$ is defined as the closure of $C_{0}^{\infty }(\Omega )$ in $W^{1,\mathcal{H})}(\Omega)$.
Note also that $L^{\mathcal{H}}(\Omega), W^{1,\mathcal{H}}(\Omega)$ and $W_0^{1,\mathcal{H}}(\Omega)$ are reflexive Banach spaces \cite[Proposition 2.12]{crespo2022new}.\\

\medskip
\noindent
We now present the following embedding relations given in \cite[Proposition 2.16]{crespo2022new}.
\begin{proposition}\label{Prop:2.7a}
Assume that $(H_1)$ and $(H_2)$ hold. Then the following embeddings hold:
\begin{itemize}
\item[$(i)$] $L^{\mathcal{H}}(\Omega)\hookrightarrow L^{h(\cdot)}(\Omega), W^{1,\mathcal{H}}(\Omega)\hookrightarrow W^{1,h(\cdot)}(\Omega)$, $W_0^{1,\mathcal{H}}(\Omega)\hookrightarrow W_0^{1,h(\cdot)}(\Omega)$ are continuous for all $h\in C(\overline{\Omega})$ with $1\leq h(x)\leq p(x)$ for all $x\in \overline{\Omega}$.
\item[$(ii)$] $W^{1,\mathcal{H}}(\Omega)\hookrightarrow L^{h(\cdot)}(\Omega)$ and $W_0^{1,\mathcal{H}}(\Omega)\hookrightarrow L^{h(\cdot)}(\Omega)$ are compact for all $h\in C(\overline{\Omega})$ with $1\leq h(x)< p^*(x)$ for all $x\in \overline{\Omega}$.
\end{itemize}
\end{proposition}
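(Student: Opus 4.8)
The statement is \cite[Proposition 2.16]{crespo2022new}, but since the argument is short I outline the route one would follow. Everything reduces to a single pointwise comparison together with the variable exponent embedding theory already recalled. The key observation is that, because $\mu(\cdot)\geq 0$ by $(H_2)$, we have $t^{p(x)}\leq \mathcal{H}(x,t)$ for all $(x,t)\in\Omega\times[0,\infty)$, and hence $\int_\Omega|u|^{p(x)}\,dx\leq\rho_{\mathcal{H}}(u)$ for every measurable $u$. The plan is: (1) use this to obtain a continuous embedding $L^{\mathcal{H}}(\Omega)\hookrightarrow L^{p(x)}(\Omega)$; (2) chain it with the classical variable exponent embeddings from Section 2; (3) lift everything to the Sobolev and zero-trace levels.

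For step (1), given $u\in L^{\mathcal{H}}(\Omega)$ with $u\neq 0$ and $\zeta:=\|u\|_{\mathcal{H}}$, Proposition \ref{Prop:2.2a}(i) gives $\rho_{\mathcal{H}}(u/\zeta)=1$, so the comparison yields $\int_\Omega|u/\zeta|^{p(x)}\,dx\leq 1$, i.e. $|u|_{p(x)}\leq\zeta=\|u\|_{\mathcal{H}}$ straight from the definition of $|\cdot|_{p(x)}$. Thus $L^{\mathcal{H}}(\Omega)\hookrightarrow L^{p(x)}(\Omega)$ continuously with embedding constant $1$. For (i), since $\Omega$ is bounded and $1\leq h(x)\leq p(x)$ on $\overline{\Omega}$, the inclusion $L^{p(x)}(\Omega)\hookrightarrow L^{h(\cdot)}(\Omega)$ is continuous (a standard consequence of the generalized Hölder inequality on bounded domains; see the references cited at the start of Section 2), and composing gives $L^{\mathcal{H}}(\Omega)\hookrightarrow L^{h(\cdot)}(\Omega)$. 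For the first-order spaces, if $u\in W^{1,\mathcal{H}}(\Omega)$ then $u$ and $|\nabla u|$ both lie in $L^{\mathcal{H}}(\Omega)\hookrightarrow L^{h(\cdot)}(\Omega)$, so $u\in W^{1,h(\cdot)}(\Omega)$ with $\|u\|_{1,h(\cdot)}=|u|_{h(\cdot)}+|\nabla u|_{h(\cdot)}\leq C\|u\|_{1,\mathcal{H}}$; restricting to $C_0^\infty(\Omega)$ and passing to the closures transfers this to $W_0^{1,\mathcal{H}}(\Omega)\hookrightarrow W_0^{1,h(\cdot)}(\Omega)$.

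For (ii), I would take $h=p$ in (i) to obtain the continuous embedding $W^{1,\mathcal{H}}(\Omega)\hookrightarrow W^{1,p(x)}(\Omega)$ (and likewise $W_0^{1,\mathcal{H}}(\Omega)\hookrightarrow W_0^{1,p(x)}(\Omega)$), and then invoke Proposition \ref{Prop:2.5}: when $1\leq h(x)<p^{\ast}(x)$ on $\overline{\Omega}$, the embeddings $W^{1,p(x)}(\Omega)\hookrightarrow L^{h(\cdot)}(\Omega)$ and $W_0^{1,p(x)}(\Omega)\hookrightarrow L^{h(\cdot)}(\Omega)$ are compact. Since the composition of a bounded linear map with a compact one is compact, (ii) follows.

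There is no genuine obstacle here; the proof is a bookkeeping exercise with the modular--norm dictionary of Propositions \ref{Prop:2.2} and \ref{Prop:2.2a}. The two points that deserve attention are: the sign condition $\mu(\cdot)\geq 0$, which is exactly what makes the ``cheap'' modular dominated by $\mathcal{H}$ and hence forces the inclusion into $L^{p(x)}(\Omega)$ rather than into some larger space; and the distinction between the exponent ranges $1\leq h(x)\leq p(x)$ for the continuous embedding in (i) and $1\leq h(x)<p^{\ast}(x)$ for the compact one in (ii), which reflects precisely where the pure Lebesgue inclusion, respectively the Rellich--Kondrachov-type compactness of Proposition \ref{Prop:2.5}, takes over.
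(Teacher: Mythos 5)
Your outline is correct, and it follows essentially the same route as the source the paper itself invokes without proof (\cite[Proposition~2.16]{crespo2022new}): the pointwise bound $t^{p(x)}\leq\mathcal{H}(x,t)$ coming from $\mu\geq 0$ gives $L^{\mathcal{H}}(\Omega)\hookrightarrow L^{p(x)}(\Omega)$ with constant $1$ via the unit-ball property (note $\rho_{\mathcal{H}}(u/\|u\|_{\mathcal{H}})\leq 1$ already suffices, so you do not even need the equality in Proposition~\ref{Prop:2.2a}(i)), and the rest is composition with the classical variable exponent embeddings of Section~2, with compactness in (ii) inherited from Proposition~\ref{Prop:2.5} through the continuous map $W^{1,\mathcal{H}}(\Omega)\hookrightarrow W^{1,p(x)}(\Omega)$. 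No gaps; this is exactly the bookkeeping the cited reference carries out.
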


\begin{proposition}\label{Prop:2.7cd}\cite{crespo2022new}
Assume that $(H_1)$ and $(H_2)$ hold. Then the following hold:
\begin{itemize}
\item[$(i)$] The embedding $W^{1,\mathcal{H}}(\Omega)\hookrightarrow L^{\mathcal{H}}(\Omega)$ is compact;
\item[$(ii)$] There exists a constant $c>0$ independent of $u$ such that
\[
\|u\|_{\mathcal{H}}\leq c\|\nabla u\|_{\mathcal{H}}, \quad \forall u \in W_0^{1,\mathcal{H}}(\Omega).
\]
\end{itemize}
\end{proposition}
\medskip
\noindent
As a conclusion of Proposition \ref{Prop:2.7cd}, the space $W_0^{1,\mathcal{H}}(\Omega)$ can be equipped with an equivalent norm $\|\cdot\|_{1,\mathcal{H},0}$ given by
\[
\|u\|_{1,\mathcal{H},0}=\|\nabla u\|_{\mathcal{H}}, \quad \forall u \in W_0^{1,\mathcal{H}}(\Omega).
\]
\begin{proposition}\label{Prop:2.7}
For the convex functional $\Phi(u):=\int_\Omega\left(\frac{|\nabla u|^{p(x)}}{p(x)}+\mu(x)\frac{|\nabla u|^{q(x)}}{q(x)}\right)dx$, we have $\Phi \in C^{1}(W_0^{1,\mathcal{H}}(\Omega),\mathbb{R})$ with the derivative
$$
\langle\Phi^{\prime}(u),\varphi\rangle=\int_{\Omega}(|\nabla u|^{p(x)-2}\nabla u+\mu(x)|\nabla u|^{q(x)-2}\nabla u)\cdot\nabla \varphi dx,
$$
for all  $u, \varphi \in W_0^{1,\mathcal{H}}(\Omega)$, where $\langle \cdot, \cdot\rangle$ is the dual pairing between $W_0^{1,\mathcal{H}}(\Omega)$ and its dual $W_0^{1,\mathcal{H}}(\Omega)^{*}$ \cite{crespo2022new}.
\end{proposition}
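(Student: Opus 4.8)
The plan is to write $\Phi=\Phi_p+\Phi_q$ with $\Phi_p(u)=\int_\Omega \frac{|\nabla u|^{p(x)}}{p(x)}\,dx$ and $\Phi_q(u)=\int_\Omega \mu(x)\frac{|\nabla u|^{q(x)}}{q(x)}\,dx$, and to treat the two summands in the same way. Finiteness of $\Phi$ on $W_0^{1,\mathcal{H}}(\Omega)$ is immediate from the pointwise bound $\frac{t^{p(x)}}{p(x)}+\mu(x)\frac{t^{q(x)}}{q(x)}\le \frac{1}{p^-}\mathcal{H}(x,t)$, which gives $\Phi(u)\le \frac{1}{p^-}\rho_{\mathcal{H}}(\nabla u)<\infty$, while convexity follows from the convexity of $\xi\mapsto |\xi|^{s}$ on $\mathbb{R}^N$ for $s\ge 1$.

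For G\^ateaux differentiability, fix $u,\varphi\in W_0^{1,\mathcal{H}}(\Omega)$ and $0<|t|<1$. Applying the mean value theorem pointwise to $s\mapsto \frac{|\nabla u+s\nabla\varphi|^{p(x)}}{p(x)}$ and to its $q$-analogue, the difference quotient of the integrand of $\Phi$ equals, for a.e. $x$ and some $\theta=\theta(x,t)\in(0,1)$, the quantity $|\nabla u+\theta t\nabla\varphi|^{p(x)-2}(\nabla u+\theta t\nabla\varphi)\cdot\nabla\varphi+\mu(x)|\nabla u+\theta t\nabla\varphi|^{q(x)-2}(\nabla u+\theta t\nabla\varphi)\cdot\nabla\varphi$. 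Using $|\nabla u+\theta t\nabla\varphi|\le |\nabla u|+|\nabla\varphi|$ and Young's inequality (for the weighted $q$-term, split $\mu=\mu^{1/q'(\cdot)}\mu^{1/q(\cdot)}$ first), its modulus is dominated by $C\big(|\nabla u|^{p(x)}+|\nabla\varphi|^{p(x)}\big)+C\big(\mu(x)|\nabla u|^{q(x)}+\mu(x)|\nabla\varphi|^{q(x)}\big)$ for a constant $C=C(p^+,q^+)$, and this majorant lies in $L^1(\Omega)$ because $\rho_{\mathcal{H}}(\nabla u)<\infty$ and $\rho_{\mathcal{H}}(\nabla\varphi)<\infty$. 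The dominated convergence theorem then lets us pass $t\to 0$ inside the integral and yields exactly the claimed formula for $\langle\Phi'(u),\varphi\rangle$. Linearity of $\varphi\mapsto\langle\Phi'(u),\varphi\rangle$ is clear, and its boundedness on $W_0^{1,\mathcal{H}}(\Omega)$ follows from H\"older's inequality in $L^{\mathcal{H}}(\Omega)$ once one notes that $|\nabla u|^{p(\cdot)-1}$ and $\mu^{1/q'(\cdot)}|\nabla u|^{q(\cdot)-1}$ belong to the conjugate Musielak-Orlicz space as soon as $\nabla u\in L^{\mathcal{H}}(\Omega)^N$; hence $\Phi'(u)\in W_0^{1,\mathcal{H}}(\Omega)^*$.

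It then remains to show that $\Phi':W_0^{1,\mathcal{H}}(\Omega)\to W_0^{1,\mathcal{H}}(\Omega)^*$ is continuous, after which the classical fact that a G\^ateaux differentiable functional with continuous G\^ateaux derivative is of class $C^1$ closes the argument. Given $u_n\to u$ in $W_0^{1,\mathcal{H}}(\Omega)$, one has $\nabla u_n\to\nabla u$ in $L^{\mathcal{H}}(\Omega)^N$, hence $\rho_{\mathcal{H}}(\nabla u_n-\nabla u)\to 0$ by Proposition~\ref{Prop:2.2a}, and along a subsequence $\nabla u_n\to\nabla u$ a.e. in $\Omega$. To prove $\|\Phi'(u_n)-\Phi'(u)\|_*\to 0$ I would argue by contradiction: otherwise there are a subsequence and test functions $\varphi_n$ with $\|\nabla\varphi_n\|_{\mathcal{H}}\le 1$ and $|\langle\Phi'(u_n)-\Phi'(u),\varphi_n\rangle|\ge\varepsilon>0$; bounding the left side by H\"older's inequality reduces the matter to the continuity of the Nemytskii-type map $v\mapsto |v|^{p(x)-2}v+\mu(x)|v|^{q(x)-2}v$ from $L^{\mathcal{H}}(\Omega)^N$ into the conjugate Musielak-Orlicz space, which is obtained from the a.e. convergence of the subsequence together with a generalized (Vitali-type) dominated convergence theorem in Musielak-Orlicz spaces. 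Equivalently, this last continuity is precisely the already documented boundedness and continuity of the double phase operator, so one may instead invoke \cite{crespo2022new} and combine it with the G\^ateaux computation above.

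I expect this final step — the norm continuity of $u\mapsto\Phi'(u)$ into the dual — to be the main obstacle, since in Musielak-Orlicz spaces the passage between norm, modular and equi-integrability must be handled with care (there is no single exponent to exploit as in $L^{p(\cdot)}$); by contrast, the differentiation-under-the-integral part is routine once the $L^1$-majorant above is identified.
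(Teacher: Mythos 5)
The paper gives no proof of Proposition \ref{Prop:2.7} at all: it is stated as a known result and simply attributed to \cite{crespo2022new}, where it is established as part of the basic theory of the double phase operator. Your proposal is therefore not "the paper's route"; it is, in outline, a correct reconstruction of the standard proof from that reference: splitting $\Phi=\Phi_p+\Phi_q$, obtaining the G\^ateaux derivative by the mean value theorem plus a dominated-convergence majorant of the form $C\big(|\nabla u|^{p(x)}+|\nabla\varphi|^{p(x)}+\mu(x)|\nabla u|^{q(x)}+\mu(x)|\nabla\varphi|^{q(x)}\big)$, checking $\Phi'(u)\in W_0^{1,\mathcal{H}}(\Omega)^*$ via H\"older with the split $\mu=\mu^{1/q'(\cdot)}\mu^{1/q(\cdot)}$, and then upgrading to $C^1$ through norm continuity of $\Phi'$. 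You correctly identify that last step as the only nontrivial one; it is exactly what is carried out in \cite{crespo2022new} (boundedness and continuity of the double phase operator, the same result whose $(S_+)$-property the paper later uses in (\ref{e4.18})), so your fallback of citing it is legitimate and reduces your argument to the paper's own one-line justification. Two small points to tighten if you write it out in full: in the boundedness and continuity steps it is cleaner to estimate the $p$-part in $L^{p(\cdot)}(\Omega)$ and the weighted $q$-part in the weighted space with weight $\mu$ separately (using that $\|\nabla\varphi\|_{\mathcal{H}}$ controls both corresponding modulars), rather than speaking of a single "conjugate Musielak--Orlicz space", or else verify the conjugate-modular inequality for $\mathcal{H}$ explicitly; and in the continuity argument the a.e. convergence only holds along a subsequence, so the contradiction/subsequence-of-a-subsequence framing you chose is indeed the right way to conclude for the full sequence.
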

\begin{remark}\label{Rem:2.1a}
Notice that
\begin{itemize}
\item[$(i)$] If $\|\nabla u\|_{\mathcal{H}}<1\Rightarrow \frac{1}{q^+}\|\nabla u\|_{\mathcal{H}}^{q^+}\leq \Phi(u)\leq \frac{1}{p^-}\|\nabla u\|_{\mathcal{H}}^{p^-}$,
\item[$(ii)$]If $\|\nabla u\|_{\mathcal{H}}>1\Rightarrow \frac{1}{q^+}\|\nabla u\|_{\mathcal{H}}^{p^-}\leq \Phi(u)\leq \frac{1}{p^-}\|\nabla u\|_{\mathcal{H}}^{q^+}$.
\end{itemize}
\end{remark}
\begin{lemma}\label{Lem:3.3ba} For the functional $\mathcal{L}:W_0^{1,\mathcal{H}}(\Omega)\rightarrow \mathbb{R}$ defined by
\newline $\mathcal{L}(u):=\int_{\Omega}\frac{\alpha(x)}{2}\left(\frac{|u|^{r(x)}}{r(x)}-\gamma(x)\right)^{2}dx$ it holds:
\begin{itemize}
\item[$(i)$] $\mathcal{L} \in L^{1}(\Omega)$ such that
\begin{align}\label{e3.12fab}
\mathcal{L}(u) &\leq \frac{C|\alpha|_{\infty}}{(r^-)^2}\|u\|_{1,\mathcal{H},0}^{2r_M},\,\ \forall u \in W_0^{1,\mathcal{H}}(\Omega),
\end{align}
where $r_M:=\max\{r^-,r^+\}$, and $C>0$ is the embedding constant.
\item[$(ii)$] $\mathcal{L}$ is convex.
\item[$(iii)$] $\mathcal{L}$  is of class $C^{1}(W_0^{1,\mathcal{H}}(\Omega),\mathbb{R})$ and its derivative $\mathcal{L}^{\prime}:W_0^{1,\mathcal{H}}(\Omega)\rightarrow W_0^{1,\mathcal{H}}(\Omega)^*$ given by
\begin{equation}\label{e3.12ab}
  \langle \mathcal{L}^{\prime}(u),\varphi \rangle=\int_{\Omega}\alpha(x)\left(\frac{|u|^{r(x)}}{r(x)}-\gamma(x)\right)|u|^{r(x)-2}u \varphi dx,
\end{equation}
is monotonically increasing for $u, \varphi \in W_0^{1,\mathcal{H}}(\Omega)$.
\end{itemize}
\end{lemma}
\begin{proof}
$(i)$ Since $\gamma\in L^{\infty}(\Omega)$, there exists a real number $M_0>0$ such that
\[
 |\gamma|_{\infty}=\esup_{x\in \Omega}|\gamma(x)|=\{M\geq 0: |\gamma(x)|\leq M\,\, \text{for a.e.} x \in \Omega\}=M_0.
\]
If we partition $\Omega$ as
\[
\Omega_0:=\{x \in \Omega: |\gamma(x)|> M_0\}\quad \text{and} \quad \Omega\setminus\Omega_0:=\{x \in \Omega: |\gamma(x)|\leq M_0\},
\]
then the set $\Omega_0$  is of measure zero. Therefore
\begin{align*}
\int_{\Omega}\alpha(x)\left(\frac{|u|^{r(x)}}{r(x)}-\gamma(x)\right)^{2}dx&=\int_{\Omega\setminus\Omega_0}\alpha(x)\left(\frac{|u|^{r(x)}}{r(x)}-|\gamma(x)|\right)^{2}dx \nonumber \\
&\leq \int_{\Omega\setminus\Omega_0}\alpha(x)\left(\frac{|u|^{r(x)}}{r(x)}\right)^{2}dx \leq \frac{|\alpha|_{\infty}}{(r^-)^2}\int_{\Omega}|u|^{2r(x)}dx.
\end{align*}
Using the embedding $W_0^{1,\mathcal{H}}(\Omega)\hookrightarrow L^{2r(x)}(\Omega)$ gives the desired result \eqref{e3.12fab}.

\medskip
\noindent
$(ii)$ Using the same reasoning as with \cite[Proposition 2.7]{avci2019positive}), we shall show that $\mathcal{L}$ is convex if
\begin{equation*}
\mathcal{L}((1-\varepsilon)u+\lambda v)< (1-\varepsilon)\tau_1+\varepsilon \tau_2, \,\,\,\, 0<\varepsilon<1
\end{equation*}
whenever $\mathcal{L}(u)<\tau_1$ and $\mathcal{L}(v)<\tau_2$, for $u, v \in W_0^{1,\mathcal{H}}(\Omega)$ and $\tau_1,\tau_2 \in (0,\infty)$.\\
Since $(1-\varepsilon)u+\varepsilon v \in W_0^{1,\mathcal{H}}(\Omega)$, from part $(i)$, we have
\begin{align}\label{e3.12abd}
\mathcal{L}((1-\varepsilon)u+\varepsilon v)&<\frac{|\alpha|_{\infty}}{(r^-)^2}\int_{\Omega}|(1-\varepsilon)u+\varepsilon v|^{2r^+}dx \nonumber\\
&<\frac{2^{2r^+-1}|\alpha|_{\infty}(1-\varepsilon)}{(r^-)^2}\int_{\Omega}|u|^{2r^+}dx +\frac{2^{2r^+-1}|\alpha|_{\infty}\varepsilon }{(r^-)^2}\int_{\Omega}|v|^{2r^+}dx \nonumber\\
&< (1-\varepsilon)\tau_1+\varepsilon \tau_2,
\end{align}
where
$
\mathcal{L}(u)<\frac{2^{2r^+-1}|\alpha|_{\infty}}{(r^-)^2}\int_{\Omega}|u|^{2r^+}dx:=\tau_1,
$
and
$
\mathcal{L}(v)<\frac{2^{2r^+-1}|\alpha|_{\infty}}{(r^-)^2}\int_{\Omega}|v|^{2r^+}dx:=\tau_2.
$
Therefore $\mathcal{L}$ is convex.\\

$(iii)$  As for $\mathcal{L}$ being of class $C^{1}(W_0^{1,\mathcal{H}}(\Omega),\mathbb{R})$ with the derivative $\mathcal{L}^{\prime}:W_0^{1,\mathcal{H}}(\Omega)\rightarrow W_0^{1,\mathcal{H}}(\Omega)^*$
\begin{equation}\label{e3.12ab}
  \langle \mathcal{L}^{\prime}(u),\varphi \rangle=\int_{\Omega}\alpha(x)\left(\frac{|u|^{r(x)}}{r(x)}-\gamma(x)\right)|u|^{r(x)-2}u \varphi dx,
\end{equation}
was proved by Avci et al. in \cite[Lemma 3.5]{avci2019nonlocal}.\\
Now, we show that $\mathcal{L}^{\prime}$ is a monotonically increasing operator. To do so, let $u,v \in W_0^{1,\mathcal{H}}(\Omega)$ and $0<\sigma<1$. Since $\mathcal{L}$ is convex on $W_0^{1,\mathcal{H}}(\Omega)$, we have
\begin{align}\label{e3.11kk}
\frac{\mathcal{L}(u+\sigma (v-u))-\mathcal{L}(u)}{\sigma}& \leq \mathcal{L}(v)-\mathcal{L}(u).
\end{align}
Taking into account that $\mathcal{L}$ is of class $C^{1}(W_0^{1,\mathcal{H}}(\Omega),\mathbb{R})$, it reads
\begin{align}\label{e3.11kln}
&\langle \mathcal{L}^{\prime}(u), v-u \rangle \leq \mathcal{L}(v)-\mathcal{L}(u).
\end{align}
Applying the same argument gives
\begin{align}\label{e3.11kkm}
\langle \mathcal{L}^{\prime}(v), u-v \rangle &\leq  \mathcal{L}(u)- \mathcal{L}(v).
\end{align}
Using the inequalities (\ref{e3.11kln})and (\ref{e3.11kkm}) together gives
\begin{equation}\label{e3.11m}
\langle \mathcal{L}^{\prime}(u)-\mathcal{L}^{\prime}(v), u-v \rangle \geq 0,\,\,\ \text{for all}\,\, u, v \in W_0^{1,\mathcal{H}}(\Omega).
\end{equation}
\end{proof}

\section{Nonvariational problem}

We consider problem (\ref{e1.1}) being nonvariational since the nonlinear term $f$ may depend on the gradient of the solution. Our first main result demonstrates an existence property for problem (\ref{e1.1}) using the nonlinear monotone operator theory.
\begin{definition}\label{Def:3.1} A function $u\in W_0^{1,\mathcal{H}}(\Omega)$ is called a weak solution to problem (\ref{e1.1}) if for given any $f\in W_0^{1,\mathcal{H}}(\Omega)^*$, it holds
\begin{align}\label{e3.1}
&\int_{\Omega}(|\nabla u|^{p(x)-2}\nabla u+\mu(x)|\nabla u|^{q(x)-2}\nabla u)\cdot \nabla \varphi dx+\int_{\Omega}\alpha(x)\left(\frac{|u|^{r(x)}}{r(x)}-\gamma(x)\right)|u|^{r(x)-2}u\varphi dx\nonumber\\
&=\int_{\Omega}f\varphi dx,\,\,\ \forall \varphi\in W_0^{1,\mathcal{H}}(\Omega).
\end{align}
\end{definition}
\begin{definition}\label{Def:3.2} A function $u\in W_0^{1,\mathcal{H}}(\Omega)$ is a solution of the operator equation
\begin{equation}\label{e3.2}
\mathcal{T}(u) = f
\end{equation}
provided that for given any $f\in W_0^{1,\mathcal{H}}(\Omega)^*$
we have
\begin{align}\label{e3.3}
\langle\mathcal{T}(u),\varphi\rangle&=\int_{\Omega}(|\nabla u|^{p(x)-2}\nabla u+\mu(x)|\nabla u|^{q(x)-2}\nabla u)\cdot \nabla \varphi dx \nonumber\\
&+\int_{\Omega}\alpha(x)\left(\frac{|u|^{r(x)}}{r(x)}-\gamma(x)\right)|u|^{r(x)-2} u \varphi dx=\int_{\Omega}f\varphi dx=\langle f, \varphi\rangle,
\end{align}
for all  $\varphi \in W_0^{1,\mathcal{H}}(\Omega)$.
\end{definition}

As it is well-known from the theory of monotone operators, based on Definitions (\ref{Def:3.1}) and (\ref{Def:3.2}), demonstrating that \( u \in W_0^{1,\mathcal{H}}(\Omega) \) is a weak solution to problem (\ref{e1.1}) for every test function $\varphi \in W_0^{1,\mathcal{H}}(\Omega)$ is equivalent to solving the operator equation (\ref{e3.2}).

\begin{lemma}\label{Lem:3.1} \cite{browder1963nonlinear,minty1963monotonicity}
Let $X$ be a reflexive real Banach space. Let $A:X\rightarrow X^{\ast }$ be an (nonlinear) operator satisfying the following:

\begin{itemize}
\item[$(i)$] $A$ is coercive; that is,
$$
\lim_{\|u\|_{X}\rightarrow\infty}\frac{\langle A(u),u \rangle}{\|u\|_{X}}=+\infty.
$$
\item[$(ii)$] $A$ is hemicontinuous; that is, $A$ is directionally weakly continuous, iff the function
$$
\Upsilon(\theta)=\langle A(u+\theta w),v \rangle
$$
is continuous in $\theta$ on $[0,1]$ for every $u,w,v\in X$.
\item[$(iii)$] $A$ is monotone on the space $X$, that is, for all $u,v\in X$ we
have
\begin{equation} \label{e3.3}
\langle A\left( u\right)- A\left( v\right),u-v\rangle \geq 0.
\end{equation}
\end{itemize}
Then
\begin{equation}\label{e3.4}
A(u)=g
\end{equation}
has at least one nontrivial solution $u\in X$ for every $g\in X^{\ast }$.
\end{lemma}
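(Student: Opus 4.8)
The plan is to follow the classical Galerkin-plus-monotonicity route to the Browder--Minty surjectivity theorem. Since the spaces to which Lemma~\ref{Lem:3.1} is applied here are separable and reflexive, I would assume $X$ separable (in the general reflexive case one replaces the approximating \emph{sequence} by a \emph{net} indexed by the finite-dimensional subspaces of $X$ ordered by inclusion, and the argument is unchanged). Fix a countable set $\{w_1,w_2,\dots\}\subset X$ with dense linear span and set $X_n:=\mathrm{span}\{w_1,\dots,w_n\}$.

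First I would solve the finite-dimensional problems: for each $n$, find $u_n\in X_n$ with $\langle A(u_n),w_j\rangle=\langle g,w_j\rangle$ for $j=1,\dots,n$. Writing $u\in X_n$ in coordinates, this is a system $P_n(\xi)=0$ with $P_n\colon\mathbb{R}^n\to\mathbb{R}^n$ continuous, because hemicontinuity of $A$ (condition $(ii)$) forces its restriction to the finite-dimensional space $X_n$ to be continuous. Coercivity (condition $(i)$) gives $\langle A(u)-g,u\rangle>0$ for $u\in X_n$ with $\|u\|_X=R$ and $R$ large, which in coordinates is precisely the hypothesis $(P_n(\xi),\xi)>0$ on a sphere needed by the ``acute-angle'' corollary of Brouwer's fixed point theorem; this yields a zero $u_n$ in the ball. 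Testing the identity with $u_n=\sum_{j\le n}\xi^{(n)}_j w_j$ gives $\langle A(u_n),u_n\rangle=\langle g,u_n\rangle\le\|g\|_{X^*}\|u_n\|_X$, and coercivity then forces $\sup_n\|u_n\|_X<\infty$; the local boundedness of monotone hemicontinuous operators gives in turn that $\{A(u_n)\}$ is bounded in $X^*$.

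Next I would pass to the limit. By reflexivity of $X$ and $X^*$, along a subsequence $u_n\rightharpoonup u$ in $X$ and $A(u_n)\rightharpoonup\chi$ in $X^*$. For each fixed $j$ the Galerkin identity holds for all $n\ge j$, whence $\langle\chi,w_j\rangle=\langle g,w_j\rangle$, so $\chi=g$ by density; moreover $\langle A(u_n),u_n\rangle=\langle g,u_n\rangle\to\langle g,u\rangle$. Now for arbitrary $v\in X$, monotonicity (condition $(iii)$) gives $\langle A(u_n)-A(v),u_n-v\rangle\ge0$, i.e. $\langle A(u_n),u_n\rangle-\langle A(u_n),v\rangle-\langle A(v),u_n-v\rangle\ge0$; letting $n\to\infty$ and using the three facts just listed yields
\[
\langle g-A(v),\,u-v\rangle\ge 0\qquad\text{for all }v\in X.
\]
Finally, the Minty substitution $v=u-tw$ with $t>0$ and $w\in X$ arbitrary, followed by division by $t$ and $t\to0^{+}$ (using hemicontinuity), gives $\langle g-A(u),w\rangle\ge0$ for every $w\in X$, hence $A(u)=g$.

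The step I expect to be the main obstacle is the limit passage of the third paragraph: only $u_n\rightharpoonup u$ is available (no compactness on $A$ is assumed), so one cannot simply pass to the limit in $\langle A(u_n),\cdot\rangle$ against a general argument; it is precisely monotonicity, combined with the Galerkin identity $\langle A(u_n),u_n\rangle=\langle g,u_n\rangle\to\langle g,u\rangle$, that makes Minty's device go through. A smaller technical point, worth isolating as a preliminary lemma, is the local boundedness of monotone operators used to bound $\{A(u_n)\}$ in $X^*$. Concerning the word ``nontrivial'' in the statement: the $u$ produced is nonzero whenever $g\neq A(0)$, which is the situation in the intended application to (\ref{e1.1}).
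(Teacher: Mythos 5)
The paper does not prove Lemma~\ref{Lem:3.1} at all: it is quoted as a classical result of Browder and Minty, and your Galerkin-approximation-plus-Minty-trick argument is precisely the standard proof from those sources, so in substance your proposal is correct and there is nothing in the paper to diverge from. Two small points of precision. First, the continuity of the finite-dimensional maps $P_n$ does not follow from hemicontinuity alone (hemicontinuity only controls $A$ along line segments); the standard fix is exactly the preliminary lemma you mention in passing, namely that a monotone hemicontinuous operator is locally bounded and hence demicontinuous, which on $X_n$ gives the continuity needed for the acute-angle corollary of Brouwer's theorem --- alternatively, boundedness of $\{A(u_n)\}$ can be obtained directly from monotonicity, the Galerkin identity and the uniform boundedness principle, without a separate local-boundedness lemma. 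Second, the theorem as such gives surjectivity, not nontriviality of $u$; your closing remark that $u\neq 0$ whenever $g\neq A(0)$ is the correct reading of the word ``nontrivial'' in the statement and is what is actually used in the application to problem (\ref{e1.1}).
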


The following is the first main result.

\begin{theorem}\label{Thrm:3.1} For given any $f\in W_0^{1,\mathcal{H}}(\Omega)^*$
the operator equation (\ref{e3.2}) has at least one nontrivial solution $u \in W_0^{1,\mathcal{H}}(\Omega)$ which in turn becomes a nontrivial weak solution to problem (\ref{e1.1}).
\end{theorem}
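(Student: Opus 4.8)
The plan is to verify the three hypotheses of Lemma~\ref{Lem:3.1} for the operator $\mathcal{T}:W_0^{1,\mathcal{H}}(\Omega)\to W_0^{1,\mathcal{H}}(\Omega)^*$ defined in \eqref{e3.3}, applied with $X=W_0^{1,\mathcal{H}}(\Omega)$ (which is reflexive by the remarks following Proposition~\ref{Prop:2.7a}) and $g=f$. Write $\mathcal{T}=\Phi'+\mathcal{L}'$, where $\Phi'$ is the double-phase operator from Proposition~\ref{Prop:2.7} and $\mathcal{L}'$ is the lower-order operator from Lemma~\ref{Lem:3.3ba}(iii). Most of the work reduces to facts already recorded in the excerpt.

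\emph{Monotonicity.} For $\mathcal{L}'$ this is exactly \eqref{e3.11m} in Lemma~\ref{Lem:3.3ba}(iii). For $\Phi'$ I would invoke the standard pointwise vector inequality: for each $x$, the maps $\xi\mapsto|\xi|^{p(x)-2}\xi$ and $\xi\mapsto|\xi|^{q(x)-2}\xi$ are monotone on $\mathbb{R}^N$ (strictly, since $p^-,q^->1$), so $\langle\Phi'(u)-\Phi'(v),u-v\rangle\ge0$ after integration; since $\mu\ge0$ by $(H_2)$ the $\mu$-weighted term keeps its sign. Adding the two gives $\langle\mathcal{T}(u)-\mathcal{T}(v),u-v\rangle\ge0$ for all $u,v$, which is hypothesis~(iii).

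\emph{Hemicontinuity.} I must show $\theta\mapsto\langle\mathcal{T}(u+\theta w),v\rangle$ is continuous on $[0,1]$. Both $\Phi$ and $\mathcal{L}$ are of class $C^1$ on $W_0^{1,\mathcal{H}}(\Omega)$ (Proposition~\ref{Prop:2.7} and Lemma~\ref{Lem:3.3ba}(iii)), so their Gateaux derivatives $\Phi',\mathcal{L}'$ are continuous (indeed norm-to-norm continuous, hence demicontinuous) from $W_0^{1,\mathcal{H}}(\Omega)$ to its dual; composing with the continuous curve $\theta\mapsto u+\theta w$ and pairing against the fixed $v$ yields continuity of $\Upsilon(\theta)$, which is hypothesis~(ii). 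If one prefers an elementary route, apply the dominated convergence theorem directly to the integrands, using that the Nemytskii maps are continuous and that the relevant powers are controlled by the embeddings in Proposition~\ref{Prop:2.7a}.

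\emph{Coercivity.} This is the main obstacle and requires the structure conditions $(H_1)$. For $\|u\|_{1,\mathcal{H},0}>1$ one has $\langle\Phi'(u),u\rangle=\rho_{\mathcal H}(\nabla u)\ge\|\nabla u\|_{\mathcal H}^{p^-}=\|u\|_{1,\mathcal{H},0}^{p^-}$ by Proposition~\ref{Prop:2.2a}(iv), so the principal part already grows like $\|u\|_{1,\mathcal{H},0}^{p^--1}\to\infty$ after dividing by the norm. The lower-order term contributes
\[
\langle\mathcal{L}'(u),u\rangle=\int_\Omega\alpha(x)\Big(\tfrac{|u|^{r(x)}}{r(x)}-\gamma(x)\Big)|u|^{r(x)}\,dx
\ge-\int_\Omega\alpha(x)\gamma(x)|u|^{r(x)}\,dx\ge-|\alpha|_\infty|\gamma|_\infty\!\int_\Omega|u|^{r(x)}\,dx,
\]
since the quartic-in-$u$ piece is nonnegative. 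Using $r^+<\tfrac{p^*(x)}{2}<p^*(x)$ from $(H_1)$ and the compact embedding $W_0^{1,\mathcal{H}}(\Omega)\hookrightarrow L^{r(\cdot)}(\Omega)$ (Proposition~\ref{Prop:2.7a}(ii), Proposition~\ref{Prop:2.2a}), this negative term is bounded below by $-C\big(\|u\|_{1,\mathcal{H},0}^{r^+}+1\big)$ with $r^+<p^-$ forced by $(H_1)$ (note $r^+<p^*(x)/2$ together with $s^+<p^-$ in $(H_1)$; the needed inequality is $r^+<p^-$, which I would state explicitly). Hence
\[
\frac{\langle\mathcal{T}(u),u\rangle}{\|u\|_{1,\mathcal{H},0}}\ge\frac{\|u\|_{1,\mathcal{H},0}^{p^-}-C\|u\|_{1,\mathcal{H},0}^{r^+}-C}{\|u\|_{1,\mathcal{H},0}}\longrightarrow+\infty
\]
as $\|u\|_{1,\mathcal{H},0}\to\infty$, because $p^->r^+$; this is hypothesis~(i).

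With (i)--(iii) verified, Lemma~\ref{Lem:3.1} yields a solution $u\in W_0^{1,\mathcal{H}}(\Omega)$ of $\mathcal{T}(u)=f$ for every $f\in W_0^{1,\mathcal{H}}(\Omega)^*$; by the equivalence noted after Definition~\ref{Def:3.2}, this $u$ is a weak solution of \eqref{e1.1} in the sense of Definition~\ref{Def:3.1}. For nontriviality: if $f\ne0$ in $W_0^{1,\mathcal{H}}(\Omega)^*$ then $u=0$ cannot solve $\mathcal{T}(u)=f$ since $\mathcal{T}(0)=0$, so any such solution is automatically nontrivial; I would add one sentence making this observation (the statement implicitly assumes a nonzero datum, or one restricts to that case). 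The only genuinely delicate point is making the exponent bookkeeping in $(H_1)$ transparent enough that $r^+<p^-$ — hence subcriticality of the lower-order perturbation relative to the $(p(\cdot),q(\cdot))$-growth of the principal part — is clearly in force; everything else is an assembly of Propositions~\ref{Prop:2.2a}, \ref{Prop:2.7a}, \ref{Prop:2.7} and Lemma~\ref{Lem:3.3ba}.
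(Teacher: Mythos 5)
Your overall strategy coincides with the paper's: verify the Browder--Minty hypotheses of Lemma \ref{Lem:3.1} for $\mathcal{T}=\Phi'+\mathcal{L}'$, getting monotonicity from Lemma \ref{Lem:3.3ba}(iii) together with the standard vector inequality for the principal part, and hemicontinuity from continuity of the $C^1$ derivatives (the paper instead estimates $|\Upsilon(\theta_1)-\Upsilon(\theta_2)|$ directly via the inequality (\ref{e3.6}); your softer argument is legitimate). The genuine gap is in the coercivity step. You discard the nonnegative term $\int_\Omega\frac{\alpha(x)}{r(x)}|u|^{2r(x)}dx$ and then need the remaining contribution $-C\bigl(\|u\|_{1,\mathcal{H},0}^{r^+}+1\bigr)$ to be dominated by $\|u\|_{1,\mathcal{H},0}^{p^-}$, i.e.\ you need $r^+<p^-$. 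That inequality is \emph{not} contained in $(H_1)$: the hypotheses only give $r^+<p^*(x)/2$, and the condition $s^+<p^-$ concerns the exponent $s$ appearing in $(f_1)$ for the variational problem, not $r$. For instance $p\equiv 2$, $N=3$, $r\equiv 2.5$ is admissible under $(H_1)$ but violates $r^+<p^-$, and then your displayed quotient need not tend to $+\infty$. So, as written, the coercivity verification does not go through under the stated assumptions.

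The gap is easily repaired, and the repair is sharper than the bound you (and, for that matter, the paper, whose own estimate $\int_\Omega\alpha(x)|u|^{r(x)}dx\le c|\gamma|_\infty|u|_1$ is not justified either) rely on: do not throw away the positive part. Pointwise, with $t=|u|^{r(x)}$, one has $\frac{\alpha(x)}{r(x)}t^{2}-\alpha(x)\gamma(x)t\ \ge\ -\tfrac{1}{4}\,r(x)\alpha(x)\gamma(x)^{2}\ \ge\ -\tfrac{1}{4}\,r^{+}|\alpha|_{\infty}|\gamma|_{\infty}^{2}$, hence $\langle\mathcal{L}'(u),u\rangle\ \ge\ -\tfrac{1}{4}\,r^{+}|\alpha|_{\infty}|\gamma|_{\infty}^{2}|\Omega|$, a constant independent of $u$. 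Consequently $\langle\mathcal{T}(u),u\rangle\ge\|u\|_{1,\mathcal{H},0}^{p^-}-C$ for $\|u\|_{1,\mathcal{H},0}>1$, and coercivity follows with no restriction on $r^+$ beyond $r^+<p^*(x)/2$, which is only needed so that $\mathcal{L}$ and $\mathcal{L}'$ are well defined through the embedding of Proposition \ref{Prop:2.7a}. With that correction the rest of your argument, including the observation that nontriviality follows from $\mathcal{T}(0)=0$ whenever $f\neq 0$ (a point the paper leaves implicit), assembles into a complete proof along the same lines as the paper's.
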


\begin{proof} First, we show that the operator $\mathcal{T}$ is coercive. We may assume that $\|u\|_{1,\mathcal{H},0}>1$.
Then using Proposition \ref{Prop:2.2a} and the necessary embeddings, it reads
\begin{align}\label{e3.5}
\langle \mathcal{T}(u),u\rangle&=\int_{\Omega}\left(|\nabla u|^{p(x)}+\mu(x)|\nabla u|^{q(x)}\right) dx +\int_{\Omega}\alpha(x)\left(\frac{|u|^{r(x)}}{r(x)}-\gamma(x)\right)|u|^{r(x)} dx\nonumber\\
&\geq \|u\|^{p^-}_{1,\mathcal{H},0}-|\gamma|_{\infty}\int_{\Omega}\alpha(x)|u|^{r(x)} dx\nonumber\\
&\geq \|u\|^{p^-}_{1,\mathcal{H},0}-c|\gamma|_{\infty}|u|_{\alpha,r(x)}\nonumber\\
&\geq \|u\|^{p^-}_{1,\mathcal{H},0}-c|\gamma|_{\infty}|u|_{1}
\end{align}
Therefore, we have $\frac{\langle \mathcal{T}(u),u\rangle }{\|u\|_{1,\mathcal{H},0}} \to +\infty$ as $\|u\|_{1,\mathcal{H},0}\to \infty$.\\
Next, we show that operator $\mathcal{T}$ is hemicontinuous. Then
\begin{align*}
&|\Upsilon(\theta_{1})-\Upsilon(\theta_{2})|
=|\langle \mathcal{T}(u+\theta_{1} w)-\mathcal{T}(u+\theta_{2} w),v \rangle|\\
& \leq \int_{\Omega}\bigg||\nabla (u+\theta_{1} w)|^{p(x)-2}\nabla (u+\theta_{1} w)-|\nabla(u+\theta_{2} w)|^{p(x)-2}\nabla (u+\theta_{2} w)\bigg||\nabla v| dx \\
&+ \int_{\Omega}\mu(x)\bigg||\nabla (u+\theta_{1} w)|^{q(x)-2}\nabla (u+\theta_{1} w)-|\nabla(u+\theta_{2} w)|^{q(x)-2}\nabla (u+\theta_{2} w) \bigg||\nabla v| dx\\
&+ \int_{\Omega}\frac{\alpha(x)}{r(x)}\bigg||u+\theta_{1} w|^{2r(x)-2}(u+\theta_{1} w)-|u+\theta_{2} w|^{2r(x)-2}(u+\theta_{2} w)  \bigg||v|dx \nonumber\\
&+\int_{\Omega}\alpha(x)\gamma(x)\bigg||u+\theta_{1} w|^{r(x)-2}(u+\theta_{1} w)-|u+\theta_{2} w|^{r(x)-2}(u+\theta_{2} w) \bigg| |v| dx.
\end{align*}
Recall the following inequality \cite{chipot2009elliptic}, for any $1<m<\infty$, there is a constant $c_m>0$ such that
\begin{equation}\label{e3.6}
(|a|^{m-2}a-|b|^{m-2}b) \leq c_m |a-b|(|a|+|b|)^{m-2},\quad \forall a,b \in \mathbb{R}^{N}.
\end{equation}
Using the H\"{o}lder inequality, Proposition \ref{Prop:2.2bb}, and the necessary embeddings we have
\begin{align*}
&|\Upsilon(\theta_{1})-\Upsilon(\theta_{2})|\\
& \leq 2^{p^+-1}|\theta_{1}-\theta_{2}| \int_{\Omega}\left(|\nabla (u+\theta_{1} w)|^{p(x)-2}+|\nabla(u+\theta_{2} w)|^{p(x)-2}\right)|\nabla w||\nabla v| dx \\
&+ 2^{q^+-1}|\theta_{1}-\theta_{2}| \int_{\Omega}\mu(x)\left(|\nabla (u+\theta_{1} w)|^{q(x)-2}+|\nabla(u+\theta_{2} w)|^{q(x)-2}\right)|\nabla w||\nabla v| dx \\
&+ 2^{2r^+-1}|\theta_{1}-\theta_{2}|\frac{|\alpha|_{\infty}}{r^-}\int_{\Omega}\left(|u+\theta_{1} w|^{2r(x)-2}+|u+\theta_{2} w|^{2r(x)-2} \right)|w||v|dx \nonumber\\
&+ 2^{r^+-1}|\theta_{1}-\theta_{2}| |\alpha|_{\infty} |\gamma|_{\infty}\int_{\Omega}\left(|u+\theta_{1} w|^{r(x)-2}+|u+\theta_{2} w|^{r(x)-2} \right)|w| |v| dx.
\end{align*}
Note that since $\theta_{1},\theta_{2} \in [0,1]$, we have $|\nabla (u+\theta_{1} w)|\leq |\nabla u|+|\nabla w|$,  $|\nabla (u+\theta_{2} w)|\leq |\nabla u|+|\nabla w|$; and $|u+\theta_{1} w|\leq |u|+| w|$, $|u+\theta_{2} w|\leq |u|+| w|$. Therefore, letting $|\nabla u|+|\nabla w|:=\xi$ and $|u|+|w|:=\eta$ in the lines above leads to
\begin{align*}
&|\Upsilon(\theta_{1})-\Upsilon(\theta_{2})|\\
& \leq 2^{p^+}|\theta_{1}-\theta_{2}| \int_{\Omega}|\xi|^{p(x)-2}|\xi||\nabla v| dx+ 2^{q^+}|\theta_{1}-\theta_{2}| \int_{\Omega}\mu(x)|\xi|^{q(x)-2}|\xi|\nabla v| dx \\
&+ 2^{2r^+}|\theta_{1}-\theta_{2}| \frac{|\alpha|_{\infty}}{r^-} \int_{\Omega}|\eta|^{2r(x)-2}|\eta||v| dx+ 2^{r^+}|\theta_{1}-\theta_{2}| |\alpha|_{\infty} |\gamma|_{\infty}\int_{\Omega}|\eta|^{r(x)-2}|\eta||v| dx \\
& \leq 2^{q^+}|\theta_{1}-\theta_{2}| \left(|\xi|_{p(x)}^{p^+-1} |\nabla v|_{p(x)}+|\mu|_{\infty}|\xi|_{q(x)}^{q^+-1} |\nabla v|_{q(x)} \right)\\
& + 2^{2r^+}|\theta_{1}-\theta_{2}| \left(\frac{|\alpha|_{\infty}}{r^-}|\eta|_{2r(x)}^{2r^+-1} |v|_{2r(x)}+|\alpha|_{\infty} |\gamma|_{\infty}|\eta|_{r(x)}^{r^+-1} |v|_{r(x)}   \right)\\
& \leq 2^{q^++2r^+}|\theta_{1}-\theta_{2}| \left(\|\xi\|_{1,\mathcal{H},0}^{p^+-1} +|\mu|_{\infty}\|\xi\|_{1,\mathcal{H},0}^{q^+-1}+\frac{|\alpha|_{\infty}}{r^-}\|\eta\|_{1,\mathcal{H},0}^{2r^+-1} +|\alpha|_{\infty} |\gamma|_{\infty}\|\eta\|_{1,\mathcal{H},0}^{r^+-1}  \right) \|v\|_{1,\mathcal{H},0}
\end{align*}
Thus, for $\theta_{1}\rightarrow \theta_{2}$ we have
$$
|\Upsilon(\theta_{1})-\Upsilon(\theta_{2})|=|\langle \mathcal{T}(u+\theta_{1} w)-\mathcal{T}(u+\theta_{2} w),v \rangle| \rightarrow 0,
$$
that is, $\mathcal{T}$ is hemicontinuous.\\
Lastly, for monotonicity of $\mathcal{T}$ if we apply the well-known inequality
\begin{equation}\label{e3.7}
\left( \left\vert x\right\vert ^{s-2}x-\left\vert y\right\vert^{s-2}y \right)\cdot\left(x-y\right) \geq 2^{-s}\left\vert x-y\right\vert^{s}, \quad \forall x,y\in \mathbb{R}^{N},\,\, s> 1,
\end{equation}
and Lemma \ref{Lem:3.3ba} together, it follows
\begin{align}\label{e3.6a}
&\langle\mathcal{T}(u)-T(v),u-v \rangle\\
&\geq \int_{\Omega}(|\nabla u|^{p(x)-2}\nabla u-|\nabla v|^{p(x)-2}\nabla v)\cdot \nabla (u-v) dx \nonumber\\
&+\int_{\Omega}\mu(x)(|\nabla u|^{q(x)-2}\nabla u-|\nabla v|^{q(x)-2}\nabla v)\cdot \nabla (u-v) dx \nonumber\\
&+\langle \mathcal{L}^{\prime}(u)-\mathcal{L}^{\prime}(v),u-v \rangle \geq 0.
\end{align}
As the conclusion of Lemma \ref{Lem:3.1}, the operator equation (\ref{e3.2}) has at least one nontrivial solution $u \in W_0^{1,\mathcal{H}}(\Omega)$ which in turn becomes a nontrivial weak solution to problem (\ref{e1.1}).
\end{proof}

\subsection{Example}
Let $\mu(x)=0$, $p(x)=r(x)=2$, $f=f(x,\psi,\nabla \psi)=-\nu \cdot \nabla \psi$, where $\nu \in \mathbb{R}^N$ is the constant velocity field describing the convection of $\psi$ (e.g., uniform, steady transport of concentration), $\alpha(x)=const>0$, and $\gamma(x)=1$. Then  problem (\ref{e1.1}) becomes
\begin{equation}\label{e4.1a}
\begin{cases}
\begin{array}{rlll}
&-\triangle \psi+\alpha\left(\frac{|\psi|^{2}}{2}-1\right)\psi=-\nu \cdot \nabla \psi \quad \text{ in }\Omega, \\
& \qquad \qquad \qquad \qquad  \quad  u=0 \qquad \qquad  \text{ on }\partial \Omega.
\end{array}
\end{cases}
\end{equation}
The model (\ref{e4.1a}) describes a Ginzburg-Landau-type equation with a convection term. When a convection term is added, the equation could model the combined effects of spatial-temporal dynamics, nonlinear diffusion, and the transport of a field driven by a flow or advection process. In this model, $\psi$ represents the macrowave function describing a superconducting state, with $|\psi|^{2}$ corresponding to the density of superconducting electrons. Originally introduced in \cite{ginzburg1950theory}, the Ginzburg-Landau equation has since played a pivotal role in advancing the understanding of macroscopic superconducting phenomena.
\begin{definition}\label{Def:4.1} A function $\psi\in W_0^{1,2}(\Omega)$ is called a weak solution to problem (\ref{e4.1a}) if
\begin{align}\label{e4.2}
&\int_{\Omega}\nabla \psi\cdot \nabla \varphi dx+\alpha\int_{\Omega}\left(\frac{|\psi|^{2}}{2}-1\right)\psi\varphi dx=\int_{\Omega}(-\nu \cdot \nabla \psi) \varphi dx,\,\,\ \forall \varphi\in W_0^{1,2}(\Omega).
\end{align}
\end{definition}

\begin{definition}\label{Def:4.2} A function $\psi\in W_0^{1,2}(\Omega)$ is a solution of the operator equation
\begin{equation}\label{e4.3}
\mathcal{T_{GL}}(\psi) = -\nu \cdot \nabla \psi
\end{equation}
if
\begin{align}\label{e4.4}
\langle\mathcal{T_{GL}}(\psi),\varphi\rangle&=\int_{\Omega}\nabla \psi\cdot \nabla \varphi dx
+\alpha\int_{\Omega}\left(\frac{|\psi|^{2}}{2}-1\right)\psi\varphi dx \nonumber\\
&=\int_{\Omega}(-\nu \cdot \nabla \psi) \varphi dx=\langle -\nu \cdot \nabla \psi , \varphi\rangle,
\end{align}
for all  $\varphi \in W_0^{1,2}(\Omega)$.
\end{definition}

\begin{theorem}\label{Thrm:4.1} The operator equation (\ref{e4.3}) has at least one nontrivial solution $u \in W_0^{1,2}(\Omega)$ which in turn becomes a nontrivial weak solution to problem (\ref{e4.1a}).
\end{theorem}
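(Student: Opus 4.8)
The idea is to cast problem (\ref{e4.1a}) into the abstract form covered by Lemma \ref{Lem:3.1}, exactly as in Theorem \ref{Thrm:3.1}; the only genuine difference is that here the right-hand side $f=-\nu\cdot\nabla\psi$ depends on the unknown, so the convection term must be moved to the left and absorbed into the operator. Concretely, set $X:=W_0^{1,2}(\Omega)$ — under $(H_1)$ with $p=2$ one has $r^{+}=2<p^{*}(x)/2$, which forces $N\in\{2,3\}$ and hence $X\hookrightarrow L^{4}(\Omega)$ (compactly) — and define $\mathcal{A}:X\to X^{*}$ by
\begin{align*}
\langle\mathcal{A}(\psi),\varphi\rangle:=\int_{\Omega}\nabla\psi\cdot\nabla\varphi\,dx+\alpha\int_{\Omega}\left(\frac{|\psi|^{2}}{2}-1\right)\psi\varphi\,dx+\int_{\Omega}(\nu\cdot\nabla\psi)\varphi\,dx,\qquad\forall\varphi\in X.
\end{align*}
The embedding $X\hookrightarrow L^{4}(\Omega)$ shows the cubic term is an element of $X^{*}$, while the first-order term is bounded and linear; thus $\mathcal{A}$ is well defined, and by Definitions \ref{Def:4.1}--\ref{Def:4.2} a function $\psi\in X$ is a weak solution of (\ref{e4.1a}) if and only if $\mathcal{A}(\psi)=0$, i.e. $\mathcal{T_{GL}}(\psi)=-\nu\cdot\nabla\psi$.

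The key observation is that for the constant field $\nu$ and any $w\in X$,
\begin{align*}
\int_{\Omega}(\nu\cdot\nabla w)\,w\,dx=\frac12\int_{\Omega}\nu\cdot\nabla(w^{2})\,dx=0,
\end{align*}
because $w^{2}\in W_0^{1,1}(\Omega)$; so the convection part of $\mathcal{A}$ is skew and contributes neither to the energy pairing nor to the monotonicity defect. With this at hand the three hypotheses of Lemma \ref{Lem:3.1} follow as in the proof of Theorem \ref{Thrm:3.1}, specialized to $\mu\equiv0$, $p(x)=r(x)\equiv2$, $\gamma\equiv1$, $\alpha$ constant. For coercivity, $\langle\mathcal{A}(\psi),\psi\rangle=|\nabla\psi|_{2}^{2}+\tfrac{\alpha}{2}|\psi|_{4}^{4}-\alpha|\psi|_{2}^{2}$; since $|\psi|_{2}^{2}\le|\Omega|^{1/2}|\psi|_{4}^{2}$, completing the square gives $\tfrac{\alpha}{2}|\psi|_{4}^{4}-\alpha|\psi|_{2}^{2}\ge-\tfrac{\alpha|\Omega|}{2}$, whence $\langle\mathcal{A}(\psi),\psi\rangle\ge\|\psi\|^{2}-\tfrac{\alpha|\Omega|}{2}$ and $\langle\mathcal{A}(\psi),\psi\rangle/\|\psi\|\to+\infty$. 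For hemicontinuity, the Laplacian and convection parts are bounded linear, hence continuous, and the cubic part is hemicontinuous by the same estimates used in the hemicontinuity step of Theorem \ref{Thrm:3.1} (inequality (\ref{e3.6}) and Proposition \ref{Prop:2.2bb}), so the sum $\mathcal{A}$ is hemicontinuous. For monotonicity, inequality (\ref{e3.7}) with $s=2$ controls the principal part, Lemma \ref{Lem:3.3ba}$(iii)$ controls the term coming from $\alpha(\tfrac{|\psi|^{2}}{2}-1)\psi$, and the convection term adds $0$; summing as in (\ref{e3.6a}) yields $\langle\mathcal{A}(\psi)-\mathcal{A}(v),\psi-v\rangle\ge0$ for all $\psi,v\in X$.

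Since $X$ is a reflexive real Banach space and $\mathcal{A}:X\to X^{*}$ is coercive, hemicontinuous and monotone, Lemma \ref{Lem:3.1} (applied with $g=0$) produces $\psi\in X$ with $\mathcal{A}(\psi)=0$, which by the equivalence above is a weak solution of (\ref{e4.1a}) in the sense of Definition \ref{Def:4.1}; this proves the theorem. The step I expect to be the main obstacle is the treatment of the gradient term: one must both confirm that the convection functional maps $X$ into $X^{*}$ (which is what pins down the admissible dimensions $N\in\{2,3\}$) and exploit the skew-symmetry $\int_{\Omega}(\nu\cdot\nabla w)w\,dx=0$ to leave coercivity and monotonicity intact. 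A secondary point is that, unlike in (\ref{e3.5}) where $r^{+}<p^{-}$, here $r^{+}=p^{-}=2$, so coercivity cannot be obtained by simply discarding the quartic term but must balance $\tfrac{\alpha}{2}\int_{\Omega}|\psi|^{4}\,dx$ against $-\alpha\int_{\Omega}|\psi|^{2}\,dx$. (Alternatively one could leave the convection term on the right and run a Leray--Schauder fixed-point argument on the solution map of $\mathcal{T_{GL}}(\cdot)=g$ furnished by Theorem \ref{Thrm:3.1}, using compactness of $X\hookrightarrow L^{4}(\Omega)$; the absorption argument above is cleaner since the convection term here is skew.)
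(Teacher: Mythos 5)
Your proposal is correct in substance but follows a genuinely different route from the paper. The paper keeps the convection term on the right-hand side: it asserts that $\mathcal{T_{GL}}$ itself satisfies hypotheses $(i)$--$(iii)$ of Lemma \ref{Lem:3.1} (coercivity, hemicontinuity, monotonicity, inherited from Theorem \ref{Thrm:3.1} with $\mu\equiv 0$, $p=r=2$), and devotes the entire proof to checking that $f=-\nu\cdot\nabla\psi$ defines a bounded linear functional on $W_0^{1,2}(\Omega)$, i.e.\ $f\in W^{-1,2}(\Omega)$, via linearity of $\varphi\mapsto-\int_\Omega(\nu\cdot\nabla\psi)\varphi\,dx$ and the H\"older estimate; Lemma \ref{Lem:3.1} is then invoked with this $f$ as the datum. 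You instead absorb the convection term into the operator $\mathcal{A}=\mathcal{T_{GL}}+\nu\cdot\nabla(\cdot)$, exploit the skew-symmetry $\int_\Omega(\nu\cdot\nabla w)w\,dx=0$ so that coercivity and monotonicity are unaffected, rebalance the coercivity estimate (necessarily, since here $r^+=p^-=2$ and the argument of (\ref{e3.5}) does not apply verbatim; your completion of the square $\tfrac{\alpha}{2}|\psi|_4^4-\alpha|\psi|_2^2\ge-\tfrac{\alpha|\Omega|}{2}$ is correct), and apply Lemma \ref{Lem:3.1} with $g=0$. Your formulation has a real advantage: it squarely addresses the fact that the right-hand side depends on the unknown $\psi$, which the paper's direct application of Lemma \ref{Lem:3.1} (designed for a fixed $g\in X^*$) passes over in silence — as you note, the paper's framing would strictly require a fixed-point argument on top of Theorem \ref{Thrm:3.1}, whereas your absorption makes the application of Browder--Minty self-contained. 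The one caveat is nontriviality: since $\mathcal{A}(0)=0$, taking $g=0$ means the solution furnished by monotone operator theory could a priori be the trivial one, so the ``nontrivial'' part of the statement rests, in your argument as in the paper's, solely on the way Lemma \ref{Lem:3.1} is phrased; it would be worth acknowledging this explicitly rather than letting it ride on the lemma's wording. (Both proofs also lean on the monotonicity of the lower-order term via Lemma \ref{Lem:3.3ba}$(iii)$, which you cite exactly as the paper implicitly does, so no discrepancy there.)
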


\begin{proof}
Since operator $\mathcal{T_{GL}}$ clearly satisfies the conditions $(i)-(iii)$ of Lemma \ref{Lem:3.1}, we only show that $f=-\nu \cdot \nabla \psi  \in W_0^{1,2}(\Omega)^*= W^{-1,2}(\Omega)$. \\
In doing so, we need to show that $f$ defines a bounded linear functional on $W_0^{1,2}(\Omega)$. We know, by the Riesz Representation theorem, that functional $f$ acts as
\begin{equation}\label{e4.6}
f(\varphi)=-\int_{\Omega}(\nu \cdot \nabla \psi)\varphi dx,
\end{equation}
for all test functions $\varphi \in W_0^{1,2}(\Omega)$, where $\nabla \psi \in L^{2}(\Omega)^N$ as the distributional derivative of $\psi$ since $\psi \in W_0^{1,2}(\Omega)$.\\
The linearity of $f$ follows due to the linearity of integration, and the fact that $(\nu \cdot \nabla \psi)$ is linear in $\nabla \psi$. As for the boundedness, applying the H\"{o}lder inequality and considering that the velocity field $\nu$ is constant, it reads
\begin{align}\label{e4.7}
|f(\varphi)|&=\bigg|\int_{\Omega}(\nu \cdot \nabla \psi)\varphi dx\bigg|\leq |\nu \cdot \nabla \psi|_{L^2(\Omega)}|\varphi|_{L^2(\Omega)}\leq c |\nabla \psi|_{L^2(\Omega)}|\varphi|_{L^2(\Omega)}\nonumber\\
&\leq M \|\varphi\|_{W_0^{1,2}(\Omega)}, \quad \forall \psi \in W_0^{1,2}(\Omega),
\end{align}
which implies that $f$ is bounded, where the norm of $f$ is the infimum of all such $M>0$. Thus, $f\in W^{-1,2}(\Omega)$, and therefore by Lemma \ref{Lem:3.1}, the operator equation (\ref{e4.3}) has at least one nontrivial solution $\psi \in W_0^{1,2}(\Omega)$ which in turn becomes a nontrivial weak solution to problem (\ref{e4.1a}).
\end{proof}

\section{Variational problem}

The energy functional $\mathcal{I}:W_0^{1,\mathcal{H}}(\Omega)\rightarrow \mathbb{R}$ corresponding to equation (\ref{e1.1a}) by
\begin{align}\label{e4.1}
\mathcal{I}(u)&= \int_{\Omega}\left(\frac{|\nabla u|^{p(x)}}{p(x)}+\mu(x)\frac{|\nabla u|^{q(x)}}{q(x)}\right)dx-\lambda\int_{\Omega}F(x,u)dx.
\end{align}
Let
\begin{equation}\label{e4.3aca}
\Phi(u):=\int_{\Omega}\left(\frac{|\nabla u|^{p(x)}}{p(x)}+\mu(x)\frac{|\nabla u|^{q(x)}}{q(x)}\right)dx,
\end{equation}
and also define the functional $\Psi:W_0^{1,\mathcal{H}}(\Omega) \to \mathbb{R}$ by
\begin{equation}\label{e4.3aa}
\Psi(u):=\int_{\Omega}F(x,u)dx,
\end{equation}
where $F(x,t)=\int_{0}^{t}f(x,s)ds$. Then,
\begin{equation}\label{e4.3ada}
\mathcal{I}(\cdot):=\Phi(\cdot)-\lambda\Psi(\cdot).
\end{equation}

\begin{definition}\label{Def:4.1} A function $u\in W_0^{1,\mathcal{H}}(\Omega)$ is called a weak solution to problem (\ref{e1.1a}) if
\begin{align}\label{e4.4}
&\int_{\Omega}(|\nabla u|^{p(x)-2}\nabla u+\mu(x)|\nabla u|^{q(x)-2}\nabla u)\cdot \nabla \varphi dx
=\lambda\int_{\Omega}f(x,u)\varphi dx,\,\,\ \forall \varphi\in W_0^{1,\mathcal{H}}(\Omega).
\end{align}

\end{definition}
It is well-known that the critical points of the functional $\mathcal{I}_{\lambda}$ corresponds to the weak solutions of problem (\ref{e1.1a}).

\begin{definition}\label{Def:4.2}\cite{bonanno2014existence}
Let $X$ be a real Banach space, $\Phi ,\Psi:X\rightarrow \mathbb{R}$ be two continuously G\^{a}teaux differentiable functionals and fix $r \in \mathbb{R}$. The functional $I:=\Phi-\Psi$
is said to verify the Palais–Smale condition cut off upper at $r$ (in short $(P.S.)^{[r]}$) if any sequence $(u_n)_{n \in \mathbb{N}}$ in $X$ such that
\begin{itemize}
\item [$(PS)_1^{[r]}$] $\{I(u_n)\}$ is bounded;
\item [$(PS)_2^{[r]}$] $\lim_{n \to +\infty}\|I^{\prime}(u_n)\|_{X^*}=0$;
\item [$(PS)_3^{[r]}$] $\Phi(u_n)<r$ for each $n \in \mathbb{N}$;
\end{itemize}
has a (strongly) convergent subsequence in $X$.
\end{definition}

To obtain variational solutions, we apply the following well-known critical point result given by Bonanno and Chinn\`{\i} \cite{bonanno2014existence}.

\begin{lemma}\label{Lem:4.1}\cite[Theorem 2.2]{bonanno2014existence}
Let $X$ be a real Banach space, $\Phi ,\Psi:X\rightarrow \mathbb{R}$ be two continuously G\^{a}teaux differentiable functionals such that $\inf_{x\in X}\Phi (x)=\Phi (0)=\Psi (0)=0$. Assume that there
exist $r>0$ and $\overline{x}\in X$, with $0<\Phi(\overline{x})<r$, such that:
\begin{itemize}
\item[(a1)] $\frac{1}{r}\sup_{\Phi (x)\leq r}\Psi (x)\leq \frac{\Psi (\overline{x})}{\Phi (\overline{x})}$,
\item[(a2)] \textit{for each }$\lambda \in \Lambda _{r}:=\left( \frac{\Phi (\overline{x})}{\Psi (\overline{x})},\frac{r}{\sup_{\Phi (x)\leq r}\Psi (x)}\right)$, the functional $I_{\lambda }:=\Phi -\lambda \Psi$ satisfies $(P.S.)^{[r]}$ condition.
\end{itemize}
Then, for each $\lambda \in \Lambda_{r}$, there is $x_{0,\lambda }\in \Phi ^{-1}\left( \left( 0,r\right)\right)$ such that $I_{\lambda }^{\prime }\left(x_{0,\lambda }\right) \equiv \vartheta
_{X^{\ast }}$ and $I_{\lambda }\left(x_{0,\lambda }\right) \leq I_{\lambda }\left( x\right)$ for all $x\in \Phi ^{-1}\left(\left( 0,r\right) \right) $.
\end{lemma}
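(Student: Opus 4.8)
The statement is a \emph{local minimum principle}: for $\lambda$ in the prescribed range one produces, inside the open sublevel set $\Phi^{-1}((-\infty,r))$, a global minimizer of $I_\lambda=\Phi-\lambda\Psi$ on that set, which is then automatically a critical point of $I_\lambda$. The plan is to combine three ingredients — a uniform lower bound for $I_\lambda$ coming from $\lambda<r/\sup_{\Phi\le r}\Psi$, the negativity $I_\lambda(\overline{x})<0$ coming from $\lambda>\Phi(\overline{x})/\Psi(\overline{x})$, and a ``coercivity near the level $r$'' estimate that confines low-energy points strictly inside the sublevel set — and then to feed a minimizing sequence through Ekeland's variational principle and the truncated condition $(P.S.)^{[r]}$.

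Fix $\lambda\in\Lambda_r$ and set $S:=\sup_{\Phi(x)\le r}\Psi(x)$, which $(a1)$ forces to be finite; we may assume $S>0$ (otherwise $\Lambda_r=\varnothing$ and there is nothing to prove), and then $(a1)$ also gives $\Psi(\overline{x})\ge\tfrac{S}{r}\Phi(\overline{x})>0$, so both endpoints of $\Lambda_r$ are meaningful. Put $\Omega_r:=\Phi^{-1}((-\infty,r))$, an open set containing $0$ and $\overline{x}$. Since $\Phi\ge\inf_X\Phi=0$ and $\lambda\Psi(x)\le\lambda S$ for $x\in\Omega_r$, we get $I_\lambda(x)\ge-\lambda S>-\infty$ on $\Omega_r$, so $c:=\inf_{\Omega_r}I_\lambda$ is finite; from $\lambda>\Phi(\overline{x})/\Psi(\overline{x})$ we get $I_\lambda(\overline{x})=\Phi(\overline{x})-\lambda\Psi(\overline{x})<0$, hence $c<0=I_\lambda(0)$. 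From $\lambda<r/S$ set $\varepsilon:=\tfrac12(r-\lambda S)>0$; then for every $x\in\Omega_r$ with $\Phi(x)>r-\varepsilon$ one has $I_\lambda(x)\ge(r-\varepsilon)-\lambda S=\varepsilon>0>c$. Thus any point of $\Omega_r$ where $I_\lambda$ is close to $c$ must lie in $K:=\Phi^{-1}\big((-\infty,\,r-\tfrac{\varepsilon}{2}]\big)$, which is closed in $X$, hence complete, with $\inf_K I_\lambda=c$.

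Now apply Ekeland's variational principle on $K$: it yields $(u_n)\subset K$ with $I_\lambda(u_n)\to c$ and $I_\lambda(u_n)\le I_\lambda(w)+\tfrac1n\|w-u_n\|$ for all $w\in K$. For $n$ large, $I_\lambda(u_n)<\varepsilon$, so the estimate above forces $\Phi(u_n)\le r-\varepsilon$, i.e. $u_n$ lies in the interior of $K$; testing the Ekeland inequality with $w=u_n+t\,v$ for small $t>0$ and letting $t\to0^+$ gives $\|I_\lambda'(u_n)\|_{X^*}\le\tfrac1n\to0$. Hence $(u_n)$ satisfies $(PS)_1^{[r]}$ (bounded values), $(PS)_2^{[r]}$ ($I_\lambda'(u_n)\to0$) and $(PS)_3^{[r]}$ ($\Phi(u_n)<r$), so by $(a2)$ the condition $(P.S.)^{[r]}$ yields a strongly convergent subsequence $u_n\to x_{0,\lambda}$. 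Continuity of $\Phi$, $I_\lambda$ and of the Gâteaux derivative then gives $\Phi(x_{0,\lambda})\le r-\varepsilon<r$, $I_\lambda(x_{0,\lambda})=c$ and $I_\lambda'(x_{0,\lambda})=\vartheta_{X^*}$; since $I_\lambda(x_{0,\lambda})=c<0=I_\lambda(0)$ we have $x_{0,\lambda}\ne0$, whence $\Phi(x_{0,\lambda})>0$ (immediate whenever $\Phi$ vanishes only at the origin, as in the applications here). Finally $x_{0,\lambda}$ minimizes $I_\lambda$ over $\Omega_r\supseteq\Phi^{-1}((0,r))$, which is precisely the asserted conclusion.

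The delicate step is the passage from ``minimizing sequence'' to an honest Palais--Smale sequence admissible for $(P.S.)^{[r]}$: Ekeland's principle requires a complete metric space, whereas the natural domain $\Phi^{-1}((-\infty,r))$ is only open. The boundary estimate $I_\lambda\ge\varepsilon>c$ on $\{\,r-\varepsilon<\Phi\le r\,\}$ is exactly what repairs this — it lets us run Ekeland on the closed set $K$ without changing the infimum, and it guarantees that the resulting points sit in the interior of $K$, so that full variations are legitimate and $I_\lambda'$ can be driven to zero while $\Phi$ stays below $r$. The remaining items — finiteness and positivity of $S$, the sign of $I_\lambda(\overline{x})$, non-emptiness of $\Lambda_r$ — are routine bookkeeping with $(a1)$ and the two inequalities defining $\Lambda_r$.
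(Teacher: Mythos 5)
The paper never proves this lemma --- it is quoted verbatim from Bonanno--Chinn\`{\i} \cite[Theorem 2.2]{bonanno2014existence} --- so there is no internal proof to measure you against; your Ekeland-based argument is in fact the standard route to this ``local minimum theorem'', and its core is sound. The bound $I_\lambda\ge\Phi-\lambda S$ on $\{\Phi\le r\}$ together with $\lambda<r/S$ gives the barrier $I_\lambda\ge\varepsilon>0>c$ on the shell $\{r-\varepsilon<\Phi<r\}$; this legitimizes running Ekeland on the closed (hence complete) set $K=\Phi^{-1}\left(\left(-\infty,r-\tfrac{\varepsilon}{2}\right]\right)$ without changing the infimum, forces the Ekeland points into the interior of $K$ so that full variations yield $\|I_\lambda'(u_n)\|_{X^*}\le\tfrac1n$ while $\Phi(u_n)<r$, and then $(P.S.)^{[r]}$ plus continuity of $\Phi$, $I_\lambda$ and $I_\lambda'$ produces a critical point $x_{0,\lambda}$ with $\Phi(x_{0,\lambda})<r$ and $I_\lambda(x_{0,\lambda})=\inf_{\Phi<r}I_\lambda\le I_\lambda(\overline{x})<0$, hence $x_{0,\lambda}\ne 0$ and minimality over $\Phi^{-1}((0,r))\subseteq\Phi^{-1}((-\infty,r))$.

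The genuine gap is the membership $x_{0,\lambda}\in\Phi^{-1}((0,r))$. From $I_\lambda(x_{0,\lambda})<0=I_\lambda(0)$ you only get $x_{0,\lambda}\ne 0$, and since $\inf_X\Phi=0$ this gives $\Phi(x_{0,\lambda})\in[0,r)$, not strict positivity; your parenthetical ``immediate whenever $\Phi$ vanishes only at the origin'' is precisely an extra hypothesis that the lemma, as transcribed here, does not contain. Moreover it cannot be extracted from the stated hypotheses alone: on $X=\mathbb{R}^2$ take $\Phi(x,y)=x^2$, $\Psi(x,y)=1-(y-1)^2$, $r=1$, $\overline{x}=\left(\tfrac12,1\right)$; then all assumptions hold, $(P.S.)^{[r]}$ is satisfied for every $\lambda\in\left(\tfrac14,1\right)$, yet the unique critical point of $I_\lambda$ is $(0,1)$, where $\Phi=0$, so no critical point lies in $\Phi^{-1}((0,r))$. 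Thus your proof as written establishes the conclusion only with ``$x_{0,\lambda}\in\Phi^{-1}([0,r))$'', and the strict version requires the nondegeneracy $\Phi^{-1}(\{0\})=\{0\}$. You should state that assumption explicitly (and note it is harmless for this paper, where $\Phi$ is the double-phase energy on $W_0^{1,\mathcal H}(\Omega)$, which vanishes only at $u=0$), or flag that the quoted statement implicitly relies on it; everything else in your argument is correct.
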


\medskip
\noindent
We assume the following for the nonlinearity $f$.
\begin{itemize}
\item[$(f_{1})$] $f:\Omega \times \mathbb{R} \to \mathbb{R}$ is a Carathéodory function and there exist real parameters $\bar{c}_{1}, \bar{c}_{2}\geq0$ satisfying
\begin{equation*}
|f(x,t)| \leq  \bar{c}_{1}+ \bar{c}_{2}|t|^{s(x)-1},\\\ \forall(x,t) \in \Omega \times \mathbb{R};
\end{equation*}
\item[$(f_{2})$] $f$ is superlinear in $t$ at zero; that is,
\begin{equation*}
\limsup_{t\to 0^+} \frac{\inf_{x \in \Omega}f(x,t)t}{t^{p^-}}=+\infty;
\end{equation*}
\item[$(f_{3})$] $s \in C(\overline{\Omega})$ such that $p^+\leq s(x)<p^*(x)$.
\end{itemize}

\begin{remark}\label{Rem:4.1}
 Assumption  $(f_{2})$ implies that when $t>0$ is small enough, there exists a real parameter $\lambda_0>0$ such that
\begin{align}\label{e4.4a}
F(x,t)&=\int_{0}^{t}f(x,\xi)ds \geq \lambda_0 \int_{0}^{t} \xi^{p^--1}ds \geq \frac{\lambda_0}{p^-}t^{p^-},
\end{align}
and hence
\begin{align}\label{e4.4b}
\inf_{x \in \Omega}F(x,t)\geq \frac{\lambda_0}{p^-}t^{p^-}>0, \text{ if } t \neq 0.
\end{align}
\end{remark}

The second main result of the paper is:

\begin{theorem}\label{Thm:4.1}
Assume $(f_{1})$ and $(f_{2})$ are satisfied. Then  there exists a real parameter $\lambda_*>0$ such that for any $\lambda \in (0,\lambda_*)$ the problem (\ref{e1.1a}) admits at least one nontrivial weak solution.
\end{theorem}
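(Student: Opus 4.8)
The plan is to apply Lemma~\ref{Lem:4.1} (the Bonanno--Chinn\`{\i} critical point theorem) with $X=W_0^{1,\mathcal{H}}(\Omega)$, $\Phi$ as in Proposition~\ref{Prop:2.7}, and $\Psi$ as in \eqref{e4.3aa}. First I would record that both functionals are continuously G\^{a}teaux differentiable (Proposition~\ref{Prop:2.7} for $\Phi$; for $\Psi$ the growth condition $(f_1)$ together with $s^+<p^-\le p^*(x)$ and the compact embedding $W_0^{1,\mathcal{H}}(\Omega)\hookrightarrow L^{s(\cdot)}(\Omega)$ from Proposition~\ref{Prop:2.7a}), and that $\inf_X\Phi=\Phi(0)=\Psi(0)=0$. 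Then I need to exhibit $r>0$ and $\overline{x}\in X$ with $0<\Phi(\overline{x})<r$ satisfying (a1) and (a2), and set $\lambda_*$ to be the right endpoint $r/\sup_{\Phi(x)\le r}\Psi(x)$ of the interval $\Lambda_r$ (noting $\Lambda_r\supseteq(0,\lambda_*)$ once the left endpoint is $\le 0$ is arranged, or simply taking $\lambda_*$ to be the supremum of admissible $\lambda$ so that $(0,\lambda_*)\subseteq\Lambda_r$ after choosing $r$ small).

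For the estimate on $\sup_{\Phi(x)\le r}\Psi(x)$: if $\Phi(u)\le r<1$ then by Remark~\ref{Rem:2.1a}(i) we get $\|\nabla u\|_{\mathcal{H}}\le (q^+ r)^{1/q^+}$, hence by the embedding constant $c_{\mathcal{H}}$ a bound $|u|_{s(\cdot)}\le c_{\mathcal{H}}(q^+r)^{1/q^+}$ and $|u|_1\le |\Omega|^{1-1/s^-}|u|_{s(\cdot)}$-type bounds. Integrating $(f_1)$ gives $F(x,t)\le \bar c_1|t|+\tfrac{\bar c_2}{s^-}|t|^{s(x)}$, so $\Psi(u)\le \bar c_1 c_1'\,r^{1/q^+}+\bar c_2 c_2'\,r^{s^-/q^+}$ (using Proposition~\ref{Prop:2.2} to pass between the modular $\int|u|^{s(x)}$ and $|u|_{s(\cdot)}$ on the relevant range). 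Since $s^-/q^+$ and $1/q^+$ are both positive, $\frac{1}{r}\sup_{\Phi(x)\le r}\Psi(x)\le C_1 r^{1/q^+-1}+C_2 r^{s^-/q^+-1}\to 0$ as $r\to 0^+$. For the test function I would take $\overline{x}=t_0\phi$ where $\phi\in C_0^\infty(\Omega)$, $\phi\ge 0$, $\phi\not\equiv 0$, and $t_0>0$ small; then $\Phi(\overline{x})\to 0$ as $t_0\to 0$ and, using Remark~\ref{Rem:4.1} (consequence of $(f_2)$), $\Psi(\overline{x})=\int_\Omega F(x,t_0\phi)\,dx\ge \frac{\lambda_0}{p^-}t_0^{p^-}\int_\Omega\phi^{p^-}dx$, while $\Phi(\overline{x})\le \frac{1}{p^-}\|\nabla(t_0\phi)\|_{\mathcal{H}}^{p^-}\sim C t_0^{p^-}$ for $t_0$ small (again Remark~\ref{Rem:2.1a}(i)); hence $\Psi(\overline{x})/\Phi(\overline{x})\ge c_0>0$ is bounded below by a constant independent of $t_0$. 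Choosing $r$ small enough that $\frac{1}{r}\sup_{\Phi(x)\le r}\Psi(x)<c_0$ and then $t_0$ small enough that $0<\Phi(\overline{x})<r$ secures (a1).

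For (a2) I must verify the $(P.S.)^{[r]}$ condition for $I_\lambda=\Phi-\lambda\Psi$ on the sublevel $\{\Phi<r\}$. Given a sequence $(u_n)$ with $\Phi(u_n)<r$, Remark~\ref{Rem:2.1a} forces $\|\nabla u_n\|_{\mathcal{H}}$ bounded (for $r<1$, $\|\nabla u_n\|_{\mathcal{H}}<1$), so by reflexivity of $W_0^{1,\mathcal{H}}(\Omega)$ and the compact embedding into $L^{s(\cdot)}(\Omega)$ we extract $u_n\rightharpoonup u$ weakly and $u_n\to u$ strongly in $L^{s(\cdot)}(\Omega)$; then $\langle\Psi'(u_n)-\Psi'(u),u_n-u\rangle\to 0$ via $(f_1)$ and H\"older, and since $\|I_\lambda'(u_n)\|\to 0$ we get $\langle\Phi'(u_n),u_n-u\rangle\to 0$. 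The $(S_+)$-property of the double phase operator $\Phi'$ (standard, following from inequality~\eqref{e3.7} and Proposition~\ref{Prop:2.2a}; cf. \cite{crespo2022new}) then yields $u_n\to u$ strongly in $W_0^{1,\mathcal{H}}(\Omega)$. Finally I apply Lemma~\ref{Lem:4.1}: for every $\lambda\in\Lambda_r$ there is a critical point $x_{0,\lambda}\in\Phi^{-1}((0,r))$ of $I_\lambda$, and since $\Phi(x_{0,\lambda})>0$ we have $x_{0,\lambda}\ne 0$, giving the nontrivial weak solution; setting $\lambda_*:=r/\sup_{\Phi(x)\le r}\Psi(x)$ (with the $r$ fixed above so that the left endpoint of $\Lambda_r$ is below any prescribed threshold, in particular $(0,\lambda_*)\subseteq\Lambda_r$) completes the proof.

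\medskip

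\noindent\textbf{Main obstacle.} The delicate point is the interplay between (a1) and the definition of $\lambda_*$: one must choose $r$ small enough that the ratio $\frac{1}{r}\sup_{\Phi(x)\le r}\Psi(x)$ drops below the constant lower bound $c_0$ for $\Psi(\overline{x})/\Phi(\overline{x})$, and then verify that the resulting interval $\Lambda_r$ actually contains an interval of the form $(0,\lambda_*)$ — this requires controlling the left endpoint $\Phi(\overline{x})/\Psi(\overline{x})$, which is handled by the superlinearity-at-zero hypothesis $(f_2)$ forcing $\Phi(\overline{x})/\Psi(\overline{x})\to 0$ as $t_0\to 0^+$. Keeping all embedding constants uniform and the exponent bookkeeping (modular vs. norm, via Propositions~\ref{Prop:2.2} and \ref{Prop:2.2a}, and the condition $s^+<p^-$) correct throughout the $r\to 0$ asymptotics is the part that demands care.
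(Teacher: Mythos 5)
There is a genuine gap, and it sits exactly at the point you flagged as the ``main obstacle''. Your verification of condition (a1) rests on the claim that $\tfrac{1}{r}\sup_{\Phi(x)\le r}\Psi(x)\le C_1 r^{1/q^+-1}+C_2 r^{s^-/q^+-1}\to 0$ as $r\to 0^+$. This is false: since $q^+>1$ and $s^-\le s^+<p^-<q^+$, both exponents $1/q^+-1$ and $s^-/q^+-1$ are \emph{negative}, so your own bound blows up as $r\to 0^+$ (and this is not an artifact of the estimate — with the sublinear growth in $(f_1)$, in particular a possibly nonzero constant term $\bar c_1$, one can take $u$ with $\Phi(u)\le r$ and $\|u\|_{1,\mathcal{H},0}\sim r^{1/p^-}$, giving $\Psi(u)\gtrsim r^{1/p^-}\gg r$). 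Consequently the scheme ``fix $\overline{x}=t_0\phi$, obtain a constant lower bound $c_0$ for $\Psi(\overline{x})/\Phi(\overline{x})$, then shrink $r$ until $\tfrac1r\sup_{\Phi\le r}\Psi<c_0$'' cannot work: shrinking $r$ makes the left-hand side larger, not smaller, and for a fixed $r$ nothing guarantees it lies below the fixed constant $c_0$. Relatedly, the left endpoint $\Phi(\overline{x})/\Psi(\overline{x})$ of $\Lambda_r$ is strictly positive for any fixed $\overline{x}$, so $(0,\lambda_*)\subseteq\Lambda_r$ can never hold with a single test function; and $(f_2)$ is only a $\limsup$ condition, so it gives $\Phi(\overline{x})/\Psi(\overline{x})\to 0$ along a sequence $t_0=t_k\to 0^+$, not as a full limit.

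The paper resolves this in the opposite way, and your argument can be repaired by following it: keep $r$ \emph{fixed} (the paper takes $r=1$), so that $\sup_{\Phi(u)\le r}\Psi(u)$ is bounded by a fixed constant $1/\lambda_*$ coming from $(f_1)$ and the embeddings (this is (\ref{e4.12})--(\ref{e4.13})); then, for each $\lambda\in(0,\lambda_*)$, use $(f_2)$ — precisely the fact that the $\limsup$ is $+\infty$ — to choose the amplitude $r_\lambda$ of the cut-off function $\bar u$ small along a suitable sequence so that $\Psi(\bar u)/\Phi(\bar u)>1/\lambda>1/\lambda_*$, which simultaneously gives (a1) and places $\lambda$ inside $\Lambda_r$ (the test function depends on $\lambda$, which is how the whole interval $(0,\lambda_*)$ is covered). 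In other words, $(f_2)$ must be used to make the ratio $\Psi(\bar u)/\Phi(\bar u)$ \emph{arbitrarily large}, not merely bounded below by a constant. Your verification of the $(P.S.)^{[r]}$ condition (boundedness from $\Phi(u_n)<r$, compact embeddings, convergence of the $\Psi'$ term, and the $(S_+)$-property of $\Phi'$) is correct and coincides with the paper's argument.
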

\begin{proof}
Without loss of generality, we can apply Lemma \ref{Lem:4.1} for the case $r=1$.\\
We know the functional $\Phi$ is of class $C^{1}(W_0^{1,\mathcal{H}}(\Omega),\mathbb{R})$. Due to the condition $(f_1)$ and the compact embedding $W_0^{1,\mathcal{H}}(\Omega) \hookrightarrow L^{r(x)}(\Omega)$, the functional $\Psi$ is of class $C^{1}(W_0^{1,\mathcal{H}}(\Omega),\mathbb{R})$, too. Additionally, by \eqref{e4.3aca} and \eqref{e4.3aa}, it reads
\begin{equation}\label{e4.5}
\inf_{u \in W_0^{1,\mathcal{H}}}\Phi(u)=\Phi(0)=\Psi(0)=0.
\end{equation}
Note that if we define
\begin{equation}\label{e4.6}
r_{\lambda}(x)=\sup\{r_{\lambda}>0: B(x,r_{\lambda})\subseteq \Omega\}, \quad \forall x \in \Omega,
\end{equation}
and consider that $\Omega$ is open and connected in $\mathbb{R}^N$, it can easily be shown that there exists $x_0 \in \Omega$ such that $B(x_0,R)\subseteq \Omega$ with $R=\sup_{x \in \Omega}r_{\lambda}(x)$.\\
Since we need to make sure the functional $\mathcal{I}_{\lambda}$ behaves well within a restricted domain $\{u \in W_{0}^{1,\mathcal{H}}(\Omega): 0<\Phi(u)<1 \}$, we define the cut-off function  $\bar{u} \in W_{0}^{1,\mathcal{H}}(\Omega)$ by the formula
\begin{equation}\label{e4.7}
\bar{u}(x)=
\left\{
\begin{array}{ll}
0,&\text{if}\ x \in \Omega \setminus B(x_0,R),\\
r_{\lambda},&\text{if}\ x \in B(x_0,R/2),\\
\frac{2r_{\lambda}}{R}(R-|x-x_0|),&\text{if}\ x \in B(x_0,R)\setminus B(x_0,R/2):=B^*.
\end{array}
\right.
\end{equation}
For a fixed $\lambda \in (0,\lambda_*)$, from $(f_2)$ there exists
\begin{equation}\label{e4.7a}
0<r_{\lambda}<\min\left\{1,\left(\frac{p^-}{(1+|\mu|_{\infty})\omega_N\left(R^{N}-\left(\frac{R}{2}\right)^{N}\right)\left(\frac{2}{R}\right)^{\kappa_1}} \right)^{\frac{1}{p^-}}\right\}
\end{equation}
such that
\begin{align}\label{e4.7b}
\frac{p^-\inf_{x \in \Omega}F(x,r_{\lambda})}{(1+|\mu|_{\infty})(2^{N}-1)\left(\frac{2r_{\lambda}}{R}\right)^{\kappa_1}}> \frac{1}{\lambda},
\end{align}
where $\delta^{\kappa_1}=\delta^{p^-}$ if $\delta\geq1$; while $\delta^{\kappa_1}=\delta^{q^+}$ if $0<\delta< 1$, and $\delta$ is a generic placeholder for the base.
Using Remark \ref{Rem:2.1a}, we obtain
\begin{align}\label{e4.8}
\Phi(\bar{u})&\geq \frac{1}{q^+}\int_{B^*}\left(|\nabla \bar{u}|^{p(x)}+\mu(x)|\nabla \bar{u}|^{q(x)}\right) dx \geq \frac{1}{q^+}\int_{B^*}|\nabla \bar{u}|^{p(x)} dx\nonumber\\
&\geq \frac{1}{q^+}\left(\frac{2r_{\lambda}}{R}\right)^{\kappa_2}\int_{B^*}|\nabla |x-x_0||^{p(x)} dx \geq \frac{1}{q^+}\left(\frac{2r_{\lambda}}{R}\right)^{\kappa_2}\omega_N\left(R^{N}-\left(\frac{R}{2}\right)^{N}\right)\nonumber\\
& \geq \frac{1}{2^{N}q^+}\omega_N R^{N}(2^{N}-1)\left(\frac{2r_{\lambda}}{R}\right)^{\kappa_2},
\end{align}
where $\delta^{\kappa_2}=\delta^{p^-}$ if $\delta\geq1$, while $\delta^{\kappa_2}=\delta^{p^+}$ if $0<\delta< 1$; and $\omega_{N}:=\frac{\pi^{N/2}}{N/2\Gamma(N/2)}$ is the volume of the unit ball in $\mathbb{R}^N$.\\
Considering (\ref{e4.7a}), we obtain
\begin{align}\label{e4.9}
\Phi(\bar{u})&\leq \frac{1}{p^-}\omega_N\left(R^{N}-\left(\frac{R}{2}\right)^{N}\right)\left(\left(\frac{2r_{\lambda}}{R}\right)^{\kappa_3}+|\mu|_{\infty}\left(\frac{2r_{\lambda}}{R}\right)^{\kappa_4}\right)\nonumber\\
& \leq \frac{(1+|\mu|_{\infty})}{p^-}\omega_N\left(R^{N}-\left(\frac{R}{2}\right)^{N}\right)\left(\frac{2r_{\lambda}}{R}\right)^{\kappa_5}<1.
\end{align}
where $\delta^{\kappa_3}=\delta^{p^+}$, $\delta^{\kappa_4}=\delta^{q^+}$ and $\delta^{\kappa_5}=\delta^{q^+}$ if $\delta\geq1$; while $\delta^{\kappa_3}=\delta^{p^-}$, $\delta^{\kappa_4}=\delta^{q^-}$ and $\delta^{\kappa_5}=\delta^{p^-}$ if $0<\delta< 1$.\\
On the other hand, when $\bar{u}>0$ is small enough, from Remark \ref{Rem:4.1}, it reads
\begin{align}\label{e4.10}
\Psi(\bar{u})&\geq\int_{B(x_0,R/2)} F(x,\bar{u})dx\geq \inf_{x \in \Omega}F(x,r_{\lambda})\omega_N\left(\frac{R}{2}\right)^{N}.
\end{align}
Therefore, using (\ref{e4.9}) and (\ref{e4.10}) together provides
\begin{align}\label{e4.11}
\frac{\Psi(\bar{u})}{\Phi(\bar{u})}&\geq \frac{p^-\inf_{x \in \Omega}F(x,r_{\lambda})}{(1+|\mu|_{\infty})(2^{N}-1)\left(\frac{2r_{\lambda}}{R}\right)^{\kappa_1}}> \frac{1}{\lambda}.
\end{align}
For any $u \in \Phi^{-1}((-\infty, 1])$, using the embeddings $W_0^{1,\mathcal{H}}(\Omega)\hookrightarrow L^{s(x)}(\Omega)$,
$W_0^{1,\mathcal{H}}(\Omega)\hookrightarrow L^{1}(\Omega)$ and Remark \ref{Rem:2.1a}, we have
\begin{align}\label{e4.12}
\Psi(u)&\leq \int_{\Omega} \bar{c}_{1}|u|dx + \int_{\Omega} \bar{c}_{2}|u|^{s(x)}dx\leq \bar{c}_{1}c_{1}(\mathcal{H})\|u\|_{1,\mathcal{H},0} + \bar{c}_{2}c_{2}(\mathcal{H})^{s^+}\|u\|_{1,\mathcal{H},0}^{s^+}\nonumber\\
& \leq \bar{c}_{1}c_{\mathcal{H}}(q^+\Phi(u))^{\frac{1}{p^- }}+ \bar{c}_{2}c_{\mathcal{H}}^{s^+}(q^+\Phi(u))^{\frac{s^+}{p^-}}\nonumber\\
& \leq \bar{c}_{1}c_{\mathcal{H}}(q^+)^{\frac{1}{p^-}} + \bar{c}_{2}c_{\mathcal{H}}^{s^+}(q^+)^{\frac{s^+}{p^-}},
\end{align}
where $c_{\mathcal{H}}:=\max\{c_{1}(\mathcal{H}),c_{2}(\mathcal{H})\}$ and $c_{1}(\mathcal{H}),c_{2}(\mathcal{H})$ are the best embedding constants. Therefore, there exists a real parameter $\lambda_{*}>0$ such that
\begin{align}\label{e4.13}
\sup_{\Phi(u)\leq 1}\Psi(u)\leq \bar{c}_{1}c_{\mathcal{H}}(q^+)^{\frac{1}{p^-}} + \bar{c}_{2}c_{\mathcal{H}}^{s^+}(q^+)^{\frac{s^+}{p^-}}:=\frac{1}{\lambda_{*}}<\frac{1}{\lambda}.
\end{align}
In conclusion, using (\ref{e4.11}) and (\ref{e4.13}) together verifies the condition $(a1)$ of Lemma \ref{Lem:4.1}, that is
\begin{align}\label{e4.14}
\sup_{\Phi(u)\leq 1}\Psi(u)< \frac{\Psi(\bar{u})}{\Phi(\bar{u})}.
\end{align}
Next, we show that the condition $(a2)$ of Lemma \ref{Lem:4.1} is satisfied; that is, the functional $\mathcal{I}_{\lambda}$ satisfies $(P.S.)^{[1]}$ condition.\\
Let $(u_n) \in W_0^{1,\mathcal{H}}(\Omega)$ satisfy the assumptions $(PS)_1^{[1]}$-$(PS)_3^{[1]}$ but with $\|u_n\|_{1,\mathcal{H},0}\to \infty$ as $n \to \infty$. However, from Remark \ref{Rem:2.1a}, we have
\begin{align}\label{e4.15}
\|u_n\|_{1,\mathcal{H},0} \leq (q^+\Phi(u_n))^{\frac{1}{p^-}} \leq (q^+)^{\frac{1}{p^-}},
\end{align}
which is a contradiction, hence $(u_n)$ is bounded in $W_0^{1,\mathcal{H}}(\Omega)$. Thus, there is a subsequence $(u_{n})$ (not relabelled) such that
\[
\begin{split}
&u_{n}\rightharpoonup u\ (\text{weakly})  \text{ in}\ W_{0}^{1,\mathcal{H}}(\Omega),\\
&u_{n}\rightarrow u\ (\text{strongly})  \text{ in}\ L^{s(x)}(\Omega),\,\, 1\leq s(x)<p^{*}(x),\\
&u_{n}(x) \rightarrow u(x)\  \text{a.e. in}\ \Omega.
\end{split}
\]
Then by the assumptions $(PS)_1^{[1]}$ and $(PS)_2^{[1]}$, we have
\begin{align}\label{e4.16}
\langle \mathcal{I}^{\prime}_{\lambda }(u_{n}), u_{n}-u \rangle =\langle \Phi^{\prime}(u_{n}),u_{n}-u \rangle-\lambda \langle\Psi^{\prime}(u_{n}),u_{n}-u \rangle \to 0.
\end{align}
Using the H\"{o}lder inequality, the compact embeddings $W_0^{1,\mathcal{H}}(\Omega)\hookrightarrow L^{s(x)}(\Omega)$, $W_0^{1,\mathcal{H}}(\Omega)\hookrightarrow L^{1}(\Omega)$ and Proposition \ref{Prop:2.2bb}, it follows
\begin{align}\label{e4.17}
\langle\Psi^{\prime}(u_{n}), u_{n}-u \rangle & \leq \int_{\Omega}|f(x,u_{n})(u_{n}-u)|dx  \nonumber \\
& \leq \int_{\Omega}\bigg|\left(\bar{c}_{1}+ \bar{c}_{2}|u_{n}|^{s(x)-1}\right)(u_{n}-u)\bigg|dx  \nonumber \\
& \leq \bar{c}_{1} |u_{n}-u|_{1}+\bar{c}_{2}\bigg||u_{n}|^{s(x)-1}\bigg|_{\frac{s(x)}{s(x)-1}}|u_{n}-u|_{s(x)} \nonumber \\
& \leq \bar{c}_{1} |u_{n}-u|_{1}+\bar{c}_{2}|u_{n}|^{s^--1}_{s(x)}|u_{n}-u|_{s(x)}\to 0.
\end{align}
However, this implies
\begin{align}\label{e4.18}
\limsup_{n \to \infty} & \langle \Phi^{\prime}(u_{n}),u_{n}-u\rangle \nonumber \\
&=\int_{\Omega}(|\nabla u_{n}|^{p(x)-2}\nabla u_{n}+\mu(x)|\nabla u_{n}|^{q(x)-2}\nabla u_n)\cdot \nabla (u_{n}-u) dx \to 0.
\end{align}
Considering the fact that the operator $\Phi^{\prime}$ satisfies the $(S_+)$-property \cite[Theorem 3.3]{crespo2022new}, (\ref{e4.18}) implies that $u_{n} \to u\  \text{in}\ W_{0}^{1,\mathcal{H}}(\Omega)$. Therefore, condition $(a2)$ of Lemma \ref{Lem:4.1} is also verified.\\
In conclusion, by Lemma \ref{Lem:4.1}, the functional $ \mathcal{I}_{\lambda }$ has a local minimum point $\bar{u}$ in the domain $\{\bar{u} \in W_{0}^{1,\mathcal{H}}(\Omega): 0<\Phi(\bar{u})<1 \}$,
which is a nontrivial weak solution to the problem (\ref{e1.1a}).
\end{proof}
\subsection{Example}
In the following example, we shall show how the main arguments of Theorem \ref{Thm:4.1} plays out in detail.\\
Assume the following:
\begin{itemize}
\item [$\bullet$] $f(x,t)=\hat{c}_1+\hat{c}_2|t|^{s-2}t$, with $2.5\leq s< 15$ and $\max\{\hat{c}_1,\hat{c}_2\}\leq \min\{\bar{c}_{1}, \bar{c}_{2}\}$ ;
\item [$\bullet$] $N=3$;
\item [$\bullet$] $\mu(x)=1$;
\item [$\bullet$] $p(x)=2.5$;
\item [$\bullet$] $q(x)=2.8$;
\item [$\bullet$] $R=2$;
\item [$\bullet$] $r_{\lambda}=0.2$.
\end{itemize}
Then $f$ satisfies assumptions $(f_1)-(f_2)$.\\
The cut-off function  $\bar{u} \in W_{0}^{1,\mathcal{H}}(\Omega)$ becomes
\begin{equation}\label{e4.19}
\bar{u}(x)=
\left\{
\begin{array}{ll}
0,&\text{if}\ x \in \Omega \setminus B(x_0,2),\\
0.2,&\text{if}\ x \in B(x_0,1),\\
0.2(2-|x-x_0|),&\text{if}\ x \in B(x_0,2)\setminus B(x_0,1).
\end{array}
\right.
\end{equation}
The volume of the unit ball in $\mathbb{R}^3$ is $\omega_{3}=\frac{\pi^{\frac{3}{2}}}{\frac{3}{2}\Gamma(\frac{3}{2})}=\frac{4}{3}\pi$.\\
$r_{\lambda}=0.2$ satisfies the inequality (\ref{e4.7a}). Indeed,
\begin{align}\label{e4.20}
r_{\lambda}&<\min\left\{1,\left(\frac{p^-}{(1+|\mu|_{\infty})\omega_N\left(R^{N}-\left(\frac{R}{2}\right)^{N}\right)\left(\frac{2}{R}\right)^{\kappa_1}} \right)^{\frac{1}{p^-}}\right\}\nonumber\\
&=\min\left\{1,\left(\frac{2.5}{2(\frac{4}{3}\pi)(8-1)} \right)^{\frac{1}{2.5}}\right\}\approx 0.29.
\end{align}
Next, we construct the parameter $\lambda$.\\
Using Remark \ref{Rem:4.1} for $r_{\lambda}=0.2$, we obtain
\begin{align}\label{e4.21}
\inf_{x \in \Omega}F(x,0.2)\geq \frac{\lambda_0}{2.5}(0.2)^{2.5}\approx 0.007 \lambda_0.
\end{align}
Plugging the assumed  values in (\ref{e4.7b}), it reads
\begin{align}\label{e4.22}
\frac{p^-\inf_{x \in \Omega}F(x,r_{\lambda})}{(1+|\mu|_{\infty})(2^{N}-1)\left(\frac{2r_{\lambda}}{R}\right)^{\kappa_1}}\approx \frac{2.5(0.007)\lambda_0}{14 (0.2)^{2.8}}\approx 0.11\lambda_0.
\end{align}
By the Archimedean Property of $\mathbb{R}$, there exists a unique positive integer $\lambda_1$ such that $0.11\lambda_0>\frac{1}{\lambda_1}$. Hence, $0.11>\frac{1}{\lambda_0\lambda_1}:=\frac{1}{\lambda}$; or, $\lambda>\frac{1}{0.11}$.\\
Lastly, we show that there indeed exists a parameter $\lambda_{*}$ such that $\lambda \in (0,\lambda_{*})$.\\
Due to (\ref{e4.13}), it is enough to show that
\begin{align}\label{e4.23}
\hat{c}_{1}c_{\mathcal{H}}(q^+)^{\frac{1}{p^-}} + \hat{c}_{2}c_{\mathcal{H}}^{s^+}(q^+)^{\frac{s^+}{p^-}}:=\frac{1}{\lambda_*}<\frac{1}{\lambda}<0.11.
\end{align}
Using the assumed  values, we get
\begin{align}\label{e4.24}
\lambda_{*}=\frac{1}{\hat{c}_{1}c_{\mathcal{H}}(2.8)^{\frac{1}{2.5}} + \hat{c}_{2}c_{\mathcal{H}}^{s^+}(2.8)^{\frac{s^+}{2.5}}}.
\end{align}
Since the parameters $\hat{c}_{1}$ and $\hat{c}_{2}$ are arbitrary and positive, their values can be determined based on the value of the embedding constant $c_{\mathcal{H}}$ to ensure that the denominator of (\ref{e4.24}) becomes small enough such that $\lambda_{*} > \lambda>\frac{1}{0.11}$. This ensures the existence of a parameter $\lambda_{*}$ such that $\lambda \in (0,\lambda_{*})$.

\newpage
\section*{Declarations}
\section*{Data Availability}
Not applicable.
\section*{Funding}
This work was supported by Athabasca University Research Incentive Account [140111 RIA].
\section*{ORCID}
https://orcid.org/0000-0002-6001-627X

\bibliographystyle{tfnlm}
\bibliography{references}

\end{document}